\def\BibTeX{{\rm B\kern-.05em{\sc i\kern-.025em b}\kern-.08em
    T\kern-.1667em\lower.7ex\hbox{E}\kern-.125emX}}
\newtheorem{theorem}{Theorem}[section]
\newtheorem{lemma}[theorem]{Lemma}
\newtheorem{corollary}[theorem]{Corollary}
\newtheorem{remark}{Remark}[section]
\begin{document}

\title{Superlinear Convergence of Randomized Block Lanczos Algorithm
}

\author{\IEEEauthorblockN{Qiaochu Yuan}
\IEEEauthorblockA{\textit{Department of Mathematics} \\
\textit{UC Berkeley}\\
Berkeley, CA, USA\\
qyuan@berkeley.edu\\
}
\and
\IEEEauthorblockN{Ming Gu}
\IEEEauthorblockA{\textit{Department of Mathematics} \\
\textit{UC Berkeley}\\
Berkeley, CA, USA\\
mgu@math.berkeley.edu\\
}
\and
\IEEEauthorblockN{Bo Li}
\IEEEauthorblockA{\textit{Department of Mathematics} \\
\textit{UC Berkeley}\\
Berkeley, CA, USA\\
bo\_li@berkeley.edu \\
}
}

\maketitle

\begin{abstract}
The low rank approximation of matrices is a crucial component in many data mining applications today. A competitive algorithm for this class of problems is the randomized block Lanczos algorithm - an amalgamation of the traditional block Lanczos algorithm with a randomized starting matrix. While empirically this algorithm performs quite well, there has been scant new theoretical results on its convergence behavior and approximation accuracy, and past results have been restricted to certain parameter settings. In this paper, we present a unified singular value convergence analysis for this algorithm, for all valid choices of the block size parameter. We present novel results on the rate of singular value convergence and show that under certain spectrum regimes, the convergence is superlinear. Additionally, we provide results from numerical experiments that validate our analysis. 
\end{abstract}

\begin{IEEEkeywords}
low-rank approximation, randomized block Lanczos, block size, singular values.
\end{IEEEkeywords}

\section{Introduction}
The low rank approximation of matrices is a crucial component in many data mining applications today. In addition to functioning as a stand alone technique for dimensionality reduction \cite{cohen2015dimensionality}, denoising \cite{nguyen2013denoising}, signal processing \cite{fazel2008compressed}, data compression \cite{pmlr-v70-anderson17a}, and more, it has also been incorporated into more complex algorithms as a computational subroutine \cite{liu2013tensor,parikh2014proximal}. As part of large scale modern data processing, low rank approximations help to reveal important structural information in the raw data and to transform the data into forms that are more efficient for computation, transmission, and storage. 

The singular value decomposition (SVD) is a matrix factorization of both theoretical and practical importance, and it has a number of useful properties related to matrix nearness and rank. In particular, it is used to identify nearby matrices of lower rank, and, leaving aside the question of computational complexity, it is known that the rank-$k$ truncated SVD is the ``gold standard'' for approximating a matrix by another matrix of rank at most $k$ \cite{eckart1936approximation}.

While procedures for computing the exact rank-$k$ truncated SVD have existed since the 1960s \cite{golub1965calculating}, the computational cost of these algorithms are prohibitive at the scale of many of today's datasets. The recent applications of low rank matrix approximation techniques to big-data problems differ in both the computation efficiency requirement and the accuracy requirement of the algorithms. Firstly, we are increasingly leaving behind the era of moderately sized matrices and entering an age of web-scale datasets and big-data applications. The matrices arising from such are often extraordinarily large, exceeding the order of $10^6$ in one or both of the dimensions \cite{talwalkar2013large,mazumder2010spectral,cohen2012survey}, and have much higher computational efficiency demands on the algorithms. Secondly, while the truncated SVD may be the final desired object for previous scientific computing questions, for big-data applications, it is usually an intermediate representation for the overall classification or regression task. Empirically, the final accuracy of the task only weakly depends on the accuracy of the matrix approximation \cite{gu2015subspace}. Thus, while previous variants of truncated SVD algorithms focused on computing up to full double precision, newer iterations of these algorithms aimed at big-data applications can comfortably get by with only $2$-$3$ digits of accuracy. 

These considerations have led to the development of randomized variants of traditional SVD algorithms suited to large, sparse matrices, in particular randomized subspace iteration (RSI) and randomized block Lanczos (RBL) \cite{drineas2006fast,rokhlin2009randomized,halko2011finding,musco2015randomized}. By applying either a randomized sketching or projecting operation on the original matrix, these algorithms balance reducing computational complexity with producing an acceptably accurate approximation. While empirically they have shown to be effective and have been widely adopted by popular software packages, e.g. \cite{pedregosa2011scikit}, there has been scant new theoretical work on the convergence guarantees of the latter algorithm, the better performing but more complicated randomized block Lanczos algorithm. 

In this paper, we present novel theoretical convergence results concerning the rate of singular value convergence for the RBL algorithm, along with numerical experiments supporting these results. Our analysis presents a unified singular value convergence theory for variants of the Block Lanczos algorithm, for all valid parameter choices of block size $b$. To our knowledge, all previous results in the literature are applicable only for the choice of $b \geq k$, the target rank. We present a generalized theorem, applicable to all block sizes $b$, which coincide asymptotically with previous results for the case $b \geq k$, while providing equally strong rates of convergence for the case $b < k$.  

In Section~\ref{sec:backbround}, we present the randomized block Lanczos algorithm and discuss some previous convergence results for this algorithm. In Section~\ref{sec:theoretical_results}, we dive into our main theoretical result and its derivation, followed by corollaries for special cases. In Section~\ref{sec:numerical_experiments}, we investigate the behavior of this algorithm for different parameter settings and empirically verify the results of the previous section. Finally, we give concluding remarks in Section~\ref{sec:conclusions}.

\section{Background} \label{sec:backbround}

\subsection{Preliminaries}

Throughout this paper, our analysis assumes exact arithmetics. 

We denote matrices by bold-faced uppercase letters, e.g. $\mathbf{M}$, entries of matrices by the plain-faced lowercase letter that the entry belongs to, e.g. $m_{11}$, and block submatrices by the bold-faced or script-faced uppercase letter that the submatrix belongs to subscripted by position, possibly with subscripts, e.g. $\mathbf{M}_{11}$, $\mathcal{M}_{11}$ or $\mathbf{M}_{a \times b}$. Double numerical subscripts denote the position of the element or the submatrix, i.e. $\mathbf{M}_{11}$ and $m_{11}$ are the topmost leftmost subblock or entry of $\mathbf{M}$ respectively. $m \times n$ subscripts denote the dimensions of a submatrix, when such information is relevant, i.e. $\mathbf{M}_{a \times b}$ denote a subblock of $\mathbf{M}$ that has dimensions $a \times b$. 

Constants are denoted by script-faced uppercase or lowercase letters, e.g. $\mathcal{C}$ or $\mathcal{\alpha}$, when it is asymptotically insignificant, i.e. constant with respect to the convergence parameter.

The SVD of a matrix $\mathbf{A}$ is defined as the factorization
\begin{equation}
\mathbf{A} = \mathbf{U} \mathbf{\Sigma} \mathbf{V}^T
\end{equation}
where $\mathbf{U} = \begin{bmatrix} \mathbf{u}_1 & \cdots & \mathbf{u}_n \end{bmatrix}$ and $\mathbf{V} = \begin{bmatrix} \mathbf{v}_1 & \cdots & \mathbf{v}_n \end{bmatrix}$ are orthogonal matrices whose columns are the set of left and right singular vectors respectively, and $\mathbf{\Sigma}$ is a diagonal matrix whose entries $\mathbf{\Sigma}_{ii} = \sigma_i$ are the singular values ordered descendingly $\sigma_1 \geq \cdots \geq \sigma_n \geq 0$.

The rank-$k$ truncated SVD of a matrix is defined as 
\begin{equation}
\mathrm{svd}_k\left( \mathbf{A} \right) = \mathbf{U}_k \mathbf{\Sigma}_k \mathbf{V}_k
\end{equation}
where $\mathbf{U}_k = \begin{bmatrix} \mathbf{u}_1 & \cdots & \mathbf{u}_k \end{bmatrix}$ and $\mathbf{V}_k = \begin{bmatrix} \mathbf{v}_1 & \cdots & \mathbf{v}_k \end{bmatrix}$ contain the first $k$ left and right singular vectors respectively, and $\mathbf{\Sigma}_k = \mathrm{diag}(\sigma_1, \cdots, \sigma_k)$.

The $i$th singular values of an arbitrary matrix $\mathbf{M}$ is denoted by $\sigma_i(\mathbf{M})$, or simply $\sigma_i$ when it is clear from context the matrix in question. 

The $p$th degree Chebyshev polynomial is defined by the recurrence
\begin{align}
T_0(x) &\equiv 1 \\
T_1(x) &\equiv x \\
T_p(x) &\equiv 2p T_{p-1}(x) - T_{p-2}(x)
\end{align}
Alternatively, they may be expressed as
\begin{equation}
T_p(x) = \frac{1}{2} \left( \left(x + \sqrt{x^2 - 1}\right)^{p} + \left(x + \sqrt{x^2 - 1} \right)^{-p} \right)
\end{equation}
for $\vert x \vert > 1$, and estimated as
\begin{equation}
T_p(1+\epsilon) \approx \frac{1}{2} \left( 1 + \epsilon + \sqrt{2 \epsilon} \right)^p
\end{equation}
for $p$ large and $\epsilon$ small. 

\subsection{The Algorithm}

The randomized block Lancos algorithm is a straightforward combination of the classical block Lanczos algorithm \cite{golub1977block} with the added element of a randomized starting matrix $\mathbf{V} = \mathbf{A} \mathbf{\Omega}$. 

The pseudocode for this algorithm is outlined in Algorithm~\ref{alg:blk_lanczos}. Of the parameters of the algorithm, $k$ (target rank) is problem dependent, while $b$ (block size), $q$ (no. of iterations) are chosen by the user to control the quality and computational cost of the approximation. The algorithm requires the choices of $b, q$ to satisfy $qb \geq k$, to ensure that the Krylov subspace be at least $k$ dimensional. 

\begin{algorithm}
	\caption{randomized block Lanczos algorithm pseudocode}
	\label{alg:blk_lanczos}
	\begin{algorithmic}[1]
		\Require 
		$\begin{array}{ll}
		\mathbf{A} \in \mathbb{R}^{m \times n} & \\
		\mathbf{\Omega} \in \mathbb{R}^{n \times b} & \textrm{, random Gaussian  matrix} \\
		k & \textrm{, target rank} \\
		b & \textrm{, block size} \\
		q & \textrm{, number of Lanczos iterations}
		\end{array}$
		\Ensure $\begin{array}{ll}
		\mathbf{B}_k \in \mathbb{R}^{m \times n} & \textrm{, a rank-$k$ approximation to $\mathbf{A}$}
		\end{array}$ 
		\State Form the block column Krylov subspace matrix \hspace{1cm} $\mathbf{K} = \begin{bmatrix} \mathbf{A} \mathbf{\Omega} & (\mathbf{A}\mathbf{A}^T) \mathbf{A} \mathbf{\Omega} & \cdots & (\mathbf{A}\mathbf{A}^T)^{q}\mathbf{A}\mathbf{\Omega} \end{bmatrix}$.
		\State Compute an orthonormal basis $\mathbf{Q}$ for the column span of $\mathbf{K}$, using e.g. $\mathbf{Q}\mathbf{R} \leftarrow \mathrm{qr}(\mathbf{K})$.
		\State Project $\mathbf{A}$ onto the Krylov subspace by computing \hspace{1cm} $\mathbf{B} = \mathbf{Q} \mathbf{Q}^T \mathbf{A}$.
		\State  Compute $k$-truncated SVD $\mathbf{B}_k = \mathrm{svd}_k \left( \mathbf{B} \right) = \mathrm{svd}_k \left( \mathbf{Q}\mathbf{Q}^T \mathbf{A} \right) = \mathbf{Q} \cdot \mathrm{svd}_k \left( \mathbf{Q}^T \mathbf{A} \right)$.
		\State Return $\mathbf{B}_k$.
	\end{algorithmic}
\end{algorithm}

We present the algorithm pseudocode in this form in order to highlight the mathematical ideas that are at the core of this algorithm. It is well known that a naive implementation of any Lanczos algorithm is plagued by loss of orthogonality of the Lanczos vectors due to roundoff errors \cite{paige_article}. A practical implementation of Algorithm~\ref{alg:blk_lanczos} should involve, at the very least, a reorganization of the computation to use the three-term recurrence and bidiagonalization \cite{golub}, and reorthogonalizations of the Lanczos vectors at each step using one of the numerous schemes that has been proposed \cite{golub,parlett,simon}. 

\subsection{Previous Work}

Historically, the the classical Lanczos algorithm was developed as an eigenvalue algorithm for symmetric matrices. Its convergence analysis focused on theorems concerning the approximation quality of the approximant's eigenvalues as a function of $k$, the target rank. The analysis relied heavily on the analysis of the $k$-dimensional Krylov subspace and the choice of the associated $k$-degree Chebyshev polynomial. Classical results in this line of inquiry include those by Kaniel \cite{kaniel1966estimates}, Paige \cite{paige1971computation}, Underwood \cite{underwood1975iterative}, Saad \cite{saad_article}. 

More recently, while there has been much work on the analysis of randomized algorithms, such efforts have been focused mostly on RBL's simpler cousins, such as randomized power iteration or randomized subspace iteration \cite{halko2011finding,gu2015subspace}. The exception is the results from \cite{musco2015randomized}. To our knowledge, this is one of the few works that provide convergence analysis for randomized block Lanczos and the first work that gives ``gap''-independent theoretical bounds for this algorithm. The analysis found therein is restricted to the case for the block size, $b$, chosen at least the size of $k$, the desired target rank. Our theoretical analysis will give a more generally applicable convergence bound, encompassing the case for both $1 \leq b < k$ and $b \geq k$. In the latter case, our theoretical results will coincide with those in \cite{musco2015randomized}. In the former case, we show that the rapid convergence of the algorithm for any block size $b$ larger than the largest singular value cluster size is assured. We draw attention to this distinction in choosing the block size parameter $b$ - in our numerical experiments, we show that generally smaller choices for $b$ are favored. 

Our current work is based partially on the analysis found in \cite{gu2015subspace}. This work established aggressive multiplicative convergence bounds for the randomized subspace iteration algorithm, for both singular values and normed (Frobenius, spectral) matrix convergence. These bounds depend on both the singular value gap and the number of iterations taken by the algorithm - the former is a property of the matrix in question, and the latter is proportional to the computational complexity of the algorithm. The analysis presented in this work is linear algebraic in nature, drawing on deterministic matrix analysis, as well expectation bounds on randomized Gaussian matrices and their concentration of measure characteristics. Our current work employs similar methods, and achieves bounds of a similar form. While the details differ, core ideas, such as creating an artificial ``gap'' in the spectrum and choosing an opportune orthonormal basis for the analysis, are the same.

\section{Theoretical Results} \label{sec:theoretical_results}

\subsection{Problem Statement}
Given an arbitrary matrix $\mathbf{A} \in \mathbb{R}^{m \times n}$ and a target rank $k \leq \mathrm{rank}(\mathbf{A})$, the goal of a low-rank matrix approximation algorithm is to compute another matrix $\mathbf{B}_k \in \mathbb{R}^{m \times n}$ whose rank is at most $k$. 

There are many ways to ask and answer the question, ``how good of an approximation is $\mathbf{B}_k$ to the original $\mathbf{A}$?'' In particular, for various low-rank approximation algorithms, the answer has been provided in terms of normed approximation error \cite{halko2011finding,gu2015subspace,musco2015randomized,xiao2016spectrum}, singular subspace error \cite{chen2009lanczos,li2015convergence}, and singular value error \cite{gu2015subspace,saad_article}. 

In this paper, we focus on the singular value error for the randomized block Lanczos algorithm. As $\mathbf{B}$ is an orthogonal projection of $\mathbf{A}$ in Alg.~\ref{alg:blk_lanczos}, by the Cauchy interlacing theorem for singular values, we immediately have the upper bound
\begin{equation}
\sigma_j \geq \sigma_j(\mathbf{B}_k) 
\end{equation}
for $j = 1, \cdots, k$. 

The optimal lower bound is achieved, of course, by the rank-$k$ truncated SVD of $\mathbf{A}$, giving the tight inequality
\begin{equation}
\sigma_j \geq \sigma_j(\mathrm{svd}_k(\mathbf{A})) \geq \sigma_j
\end{equation}
for $j = 1, \cdots, k$. 

We will to show that the randomized block Lanczos algorithm provides competitive accuracy, and produces singular value estimates at least some fraction of the optimum.
\begin{equation}
\sigma_j \geq \sigma_j(\mathbf{B}_k) \geq \frac{\sigma_j}{\sqrt{1 + \{\text{some convergence factor} \}}}
\end{equation}
for $\{\text{some convergence factor}\} \rightarrow 0$. 

\subsection{Key Results}
Our convergence analysis will show that if the randomized block Lanczos algorithm converges, then the $k$ desired singular values of the approximation $\mathbf{B}_k$ converges to the corresponding true singular values of $\mathbf{A}$ exponentially in the number of iteration $q$. Moreover, convergence occurs as long as the block size $b$ is chosen to be larger than the maximum cluster size for the $k$ relevant singular values.

We present our main results here and delay their proofs to Subsection~\ref{ssec:analysis}. Our main theorem is as follows.
\begin{theorem}
	\label{thm:svconvergence}
	Let $\mathbf{B}_k$ be the matrix returned by Alg.~\ref{alg:blk_lanczos}. Assume that $\mathbf{\Omega}$ is chosen such that the two conditions in Remark~\ref{lst:conditions} hold. For any choices of $r, s \geq 0$, and any parameter choice $b, q$ satisfying $k+r = (q-p)b \geq k$, for $j = 1, \cdots, k$,
	\begin{equation}
	\sigma_j \geq \sigma_j (\mathbf{B}_k) \geq \frac{\sigma_{j+s}}{\sqrt{1 + \mathcal{C}^2 T_{2p+1}^{-2} \left( 1 + 2 \cdot \frac{\sigma_j - \sigma_{j+s+r+1}}{\sigma_{j+s+r+1}} \right) }} \label{eqn:svconvergence_withs}
	\end{equation}
	where $\mathcal{C}$ is a constant that is independent of $q$.
\end{theorem}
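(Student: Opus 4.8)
The plan is to follow the now-standard ``artificial gap'' strategy used in the randomized subspace iteration analysis of \cite{gu2015subspace}, adapted to the polynomial filter that underlies block Lanczos. First I would reduce the problem to a statement about the Krylov subspace: since $\mathbf{B}_k = \mathbf{Q}\cdot\mathrm{svd}_k(\mathbf{Q}^T\mathbf{A})$ and $\mathbf{Q}$ spans the block Krylov space $\mathcal{K} = \mathrm{span}\{\mathbf{A}\mathbf{\Omega}, (\mathbf{A}\mathbf{A}^T)\mathbf{A}\mathbf{\Omega}, \dots, (\mathbf{A}\mathbf{A}^T)^q\mathbf{A}\mathbf{\Omega}\}$, the column span of $\mathbf{Q}$ contains $\phi(\mathbf{A}\mathbf{A}^T)\mathbf{A}\mathbf{\Omega}$ for \emph{every} polynomial $\phi$ of degree $\le q$. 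Equivalently, writing $\psi(x) = x\,\phi(x^2)$, the span contains $\psi(\mathbf{A})\mathbf{\Omega}$ ranging over all odd polynomials $\psi$ of degree $\le 2q+1$. The upper bound $\sigma_j \ge \sigma_j(\mathbf{B}_k)$ is immediate from Cauchy interlacing as already noted in the excerpt, so the whole content is the lower bound.

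For the lower bound I would partition the SVD of $\mathbf{A}$ into three groups of singular values: the ``wanted'' block containing $\sigma_1,\dots,\sigma_{j+s}$ (or a suitable index determined by $s$), a middle buffer of $r+1$ values down to $\sigma_{j+s+r+1}$, and the tail. The buffer is exactly the artificial gap: it separates the part we want to capture from the part we are willing to discard, and its width $r+1$ is what forces the parameter relation $k+r = (q-p)b$ so that a degree-$(q-p)b$-ish Krylov space has enough room. The key algebraic step is to choose $\phi$ (hence $\psi$) to be a scaled, shifted Chebyshev polynomial $T_{2p+1}$ that is large on $[\sigma_{j+s}^2,\dots]$ — the interval of squared singular values we want — and uniformly small on $[0,\sigma_{j+s+r+1}^2]$; the ratio bound in the denominator, $1 + 2(\sigma_j - \sigma_{j+s+r+1})/\sigma_{j+s+r+1}$, is precisely the argument at which one evaluates $T_{2p+1}$ after the affine map sending the buffer interval to $[-1,1]$. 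Given such a polynomial, I would build a trial subspace inside $\mathrm{span}(\mathbf{Q})$ of the form $\psi(\mathbf{A})\mathbf{\Omega}$ restricted/combined appropriately, and estimate the Rayleigh quotient of $\mathbf{A}$ on it; the two conditions imposed on $\mathbf{\Omega}$ in Remark~\ref{lst:conditions} (presumably full-rank-type and coefficient-boundedness conditions on $\mathbf{V}^T\mathbf{\Omega}$) are what let me pass from ``$\mathbf{\Omega}$ has generic columns'' to an honest bound, and they are where the constant $\mathcal{C}$, independent of $q$, is generated.

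Concretely the steps are: (1) express membership of polynomial-filtered vectors in $\mathrm{span}(\mathbf{Q})$; (2) fix the three-way spectral partition and the integer bookkeeping $k+r=(q-p)b$; (3) construct the optimal odd polynomial filter from $T_{2p+1}$ composed with the affine map normalizing the buffer interval, and record the scalar $1 + 2(\sigma_j-\sigma_{j+s+r+1})/\sigma_{j+s+r+1}$ as its evaluation point; (4) form a $k$-dimensional trial subspace $\mathcal{S} \subseteq \mathrm{span}(\mathbf{Q})$ spanned by filtered versions of the dominant right singular directions, using the $\mathbf{\Omega}$-conditions to guarantee $\mathcal{S}$ is genuinely $k$-dimensional and to control the cross terms; (5) lower-bound $\sigma_j(\mathbf{Q}^T\mathbf{A})$ by the $j$-th singular value of $\mathbf{A}$ restricted to $\mathcal{S}$ via a min–max / Courant–Fischer argument, which produces exactly the $\sigma_{j+s}^2 / (1 + \mathcal{C}^2 T_{2p+1}^{-2}(\cdots))$ form after collecting the filter gain in the numerator and the tail contamination (scaled by $\mathcal{C}^2$ and suppressed by $T_{2p+1}^{-2}$) in the denominator; (6) take square roots.

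The main obstacle I anticipate is step (4)–(5): handling $b < k$. When the block size is smaller than the target rank, a single application of $\psi(\mathbf{A})\mathbf{\Omega}$ only produces $b$ columns, so one cannot capture all $k$ wanted directions with one filter — one must instead use the whole Krylov sequence and show that, across the $q$ blocks, the family $\{\psi_i(\mathbf{A})\mathbf{\Omega}\}$ for a cleverly chosen set of polynomials of graded degree together span a $k$-dimensional subspace aligned with the dominant singular directions, without destroying the uniform-smallness of the filter on the buffer-and-tail region. This is where the ``maximum cluster size $\le b$'' hypothesis becomes essential: if a cluster of nearly-equal singular values has size exceeding $b$, the corresponding right singular directions cannot be separated by any Krylov construction with block width $b$, and the argument genuinely breaks; with clusters of size $\le b$, a block-Vandermonde non-degeneracy argument (again leaning on the Remark~\ref{lst:conditions} conditions on $\mathbf{\Omega}$) rescues $k$-dimensionality. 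Threading the Chebyshev optimality through this block-Vandermonde bookkeeping, while keeping every constant independent of $q$, is the delicate part; the rest is Courant–Fischer and the standard Chebyshev growth estimate $T_{2p+1}(1+\epsilon) \approx \tfrac12(1+\epsilon+\sqrt{2\epsilon})^{2p+1}$ already quoted in the preliminaries.
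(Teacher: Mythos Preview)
Your proposal is correct and takes essentially the same approach as the paper. The paper makes your steps (3)--(5) concrete by factoring a sub-Krylov matrix as $\mathbf{K}_p = \mathbf{U}\,T_{2p+1}(\mathbf{\Sigma})\,\mathbf{V}_p$ with $\mathbf{V}_p$ the generalized block-Vandermonde matrix you anticipate, then explicitly constructing a nonsingular $\mathbf{X}$ via a Schur-complement choice on a $4\times 2$ block partition of $\mathbf{V}_p$ (rows sized $k,s,r,t$) so that $\mathbf{K}_p\mathbf{X}$ has a zero block in the ``buffer'' rows and an orthonormal top block; your Courant--Fischer step is their Lemma~3.1, $\sigma_k(\mathbf{B}_k)\ge\sigma_{k+s}/\sqrt{1+\|\mathbf{H}\|_2^2}$, and your ``block-Vandermonde non-degeneracy'' hypothesis is exactly the pair of invertibility conditions in Remark~\ref{lst:conditions}.
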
 

This inequality shows that for all valid choices of parameters $b, q$, the convergence of the approximate singular values are governed by the growth of the Chebyshev polynomial term
\begin{equation}
T_{2p+1} \left( 1 + 2 \cdot \frac{\sigma_j - \sigma_{j+s+r+1}}{\sigma_{j+s+r+1}} \right)
\end{equation}
, with the bound holding across all choices of the analysis parameters $s, r$.

Theorem~\ref{thm:svconvergence} admits the following corollaries about two special choices for the block size parameter $b$, where the constants in each case can be expressed in an algebraically closed form.  
\begin{corollary}[Special case: $b = 1$]
	For any choices of $r, s \geq 0$ satisfying $k+r = (q-p) \geq k$, for $j = 1, \cdots, k$,
	\begin{equation}
	\sigma_j \geq \sigma_j \left(\mathbf{B}_k \right) \geq \frac{\sigma_{j+s}}{\sqrt{1 + \mathcal{C}_{b=1} T_{2p+1}^{-2} \left(1 + 2 \cdot \frac{\sigma_j - \sigma_{j+s+r+1}}{\sigma_{j+s+r+1}} \right)}}
	\end{equation}
	where 
	\begin{equation}
	\mathcal{C}_{b = 1} = \left( \max_{\overset{1 \leq s \leq k}{j+r+1 \leq r \leq n}} \frac{\widehat{\omega}_r}{\widehat{\omega}_s} \right)^2 \cdot \left( \sum_{s=1}^j \sum_{r = j+r+1}^{n} \prod_{\overset{t=1}{t \neq s}}^{j+r}  \left( \frac{\sigma_r^2 - \sigma_t^2}{\sigma_s^2 - \sigma_t^2} \right)^2 \right)
	\end{equation}
	is a constant independent of $q$.
\end{corollary}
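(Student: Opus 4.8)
The corollary is the specialization of Theorem~\ref{thm:svconvergence} to block size $b=1$, so the inequality and the roles of the parameters $p, r, s$ carry over unchanged; the only new content is the closed-form evaluation of the $q$-independent constant, i.e.\ showing that the $\mathcal{C}^{2}$ of \eqref{eqn:svconvergence_withs} can be taken to be the displayed $\mathcal{C}_{b=1}$. The plan is to return to the derivation of Theorem~\ref{thm:svconvergence} in Subsection~\ref{ssec:analysis}, isolate the step at which $\mathcal{C}$ enters, and evaluate it for $b=1$. I expect that $\mathcal{C}$ is an upper bound for the norm of a matrix assembled from the ``tail'' block $\widehat{\mathbf{\Omega}}_2$ of the starting matrix expressed in the right-singular-vector basis, $\widehat{\mathbf{\Omega}}=\mathbf{V}^{T}\mathbf{\Omega}$, together with the inverse of a square block-Vandermonde-type matrix $\mathbf{W}_1$ formed from the retained squared singular values with seed $\widehat{\mathbf{\Omega}}_1$; the invertibility of $\mathbf{W}_1$, on which the general analysis relies, is exactly where the hypothesis that the largest singular-value cluster has size at most $b$ is used.

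Now set $b=1$. The starting matrix is a single Gaussian vector $\mathbf{\Omega}\in\mathbb{R}^{n\times 1}$, its coordinates in the right-singular basis are the scalars $\widehat\omega_i=\mathbf{v}_i^{T}\mathbf{\Omega}$, and the block Krylov space collapses to the ordinary scalar Krylov space $\mathrm{span}\{\,p(\mathbf{A}\mathbf{A}^{T})\mathbf{A}\mathbf{\Omega}:\deg p\le q\,\}=\mathrm{span}\bigl\{\,\textstyle\sum_i p(\sigma_i^{2})\,\sigma_i\,\widehat\omega_i\,\mathbf{u}_i:\deg p\le q\,\bigr\}$. Consequently $\mathbf{W}_1$ degenerates to an honest Vandermonde matrix in the variables $\sigma_1^{2},\dots,\sigma_{j+r}^{2}$, up to a diagonal scaling by the $\widehat\omega_i$ and the $\sigma_i$; the cluster-size-$1$ hypothesis forces these nodes to be distinct, so this Vandermonde is nonsingular, and the genericity of $\mathbf{\Omega}$ from Remark~\ref{lst:conditions} gives $\widehat\omega_i\neq 0$. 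I would then invoke the classical formula for the inverse of a Vandermonde matrix: the entry of the relevant off-diagonal block $\widehat{\mathbf{\Omega}}_2\mathbf{W}_1^{-1}$ indexed by a tail index $r$ and a retained index $s$ equals, once the singular-value weights (those carried by the seed $\mathbf{A}\mathbf{\Omega}=\sum_i\sigma_i\widehat\omega_i\mathbf{u}_i$ and those introduced when $\mathbf{A}$ is projected onto the Krylov space) cancel, exactly $\dfrac{\widehat\omega_r}{\widehat\omega_s}\prod_{t=1,\,t\neq s}^{\,j+r}\dfrac{\sigma_r^{2}-\sigma_t^{2}}{\sigma_s^{2}-\sigma_t^{2}}$, i.e.\ the ratio $\widehat\omega_r/\widehat\omega_s$ times the Lagrange basis polynomial $L_s$ on the nodes $\sigma_1^{2},\dots,\sigma_{j+r}^{2}$ evaluated at $\sigma_r^{2}$. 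Bounding the spectral norm of $\widehat{\mathbf{\Omega}}_2\mathbf{W}_1^{-1}$ by its Frobenius norm, pulling the largest factor $\max\,\widehat\omega_r/\widehat\omega_s$ out of the sum, and summing the squares of the remaining Lagrange factors over $s=1,\dots,j$ and over the tail indices $r$ reproduces precisely the stated $\mathcal{C}_{b=1}$; it is independent of $q$ because it is built entirely from the fixed singular values of $\mathbf{A}$ and the fixed realization of $\mathbf{\Omega}$.

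The substance here is bookkeeping rather than a new idea, and that is where I expect the difficulty to concentrate: one must line up the degree $p$ of the Chebyshev factor with the $b=1$ dimension count $k+r=q-p$, confirm that the $j+r$ interpolation nodes needed for the Lagrange construction fit inside the ``undamped'' part of the $q$-step Krylov space (which holds since $j\le k$ forces $j+r\le k+r=q-p$), and check that the scalar specialization of the block-Vandermonde inverse appearing in the general proof is the non-confluent one --- again exactly what the maximum-cluster-size-$\le 1$ assumption guarantees. A further point to pin down is the exact cancellation of the singular-value weights so that only the Gaussian-coefficient ratios $\widehat\omega_r/\widehat\omega_s$ remain in $\mathcal{C}_{b=1}$; but since the corollary only asserts that $\mathcal{C}_{b=1}$ is a valid $q$-independent upper bound for the constant in Theorem~\ref{thm:svconvergence}, passing through the Frobenius norm is harmless even if it loses a constant factor.
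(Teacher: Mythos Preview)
The paper does not actually supply a proof of this corollary: immediately after the proof of Theorem~\ref{thm:svconvergence} it states that ``due to space constraint, we omit the proofs for the corollaries of Theorem~\ref{thm:svconvergence}. They are similar in flavor to the proof above and involve constructions of specifically chosen $\mathbf{X}$ matrices in each case.'' So there is no detailed argument to compare against, only this hint.

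Your proposal is consistent with that hint and with the structure of the general proof. The constant $\mathcal{C}^{2}$ in Theorem~\ref{thm:svconvergence} is identified there as $\Vert (\mathbf{V}_{41}-\mathbf{V}_{42}\mathbf{V}_{32}^{-1}\mathbf{V}_{31})(\mathbf{V}_{11}-\mathbf{V}_{12}\mathbf{V}_{32}^{-1}\mathbf{V}_{31})^{-1}\Vert_{2}^{2}$, with the $\mathbf{V}_{ij}$ blocks of the generalized Vandermonde matrix $\mathbf{V}_{p}=[\,\widehat{\mathbf{\Omega}}\;\widehat{\mathbf{\Sigma}}\widehat{\mathbf{\Omega}}\;\cdots\;\widehat{\mathbf{\Sigma}}^{q-p}\widehat{\mathbf{\Omega}}\,]$. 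For $b=1$ each row of $\mathbf{V}_{p}$ is $\widehat\omega_{i}\,(1,\sigma_{i}^{2},\sigma_{i}^{4},\ldots)$, i.e.\ a diagonal scaling by $\mathrm{diag}(\widehat\omega_{i})$ of a genuine Vandermonde in the nodes $\sigma_{i}^{2}$. Your observation that one may then replace the Schur-complement-based $\mathbf{X}$ of the general proof by the direct inverse of the top square Vandermonde block---equivalently, choose $\mathbf{X}$ so that the first $j+r$ columns of $\mathbf{V}_{p}\mathbf{X}$ are canonical---is exactly the kind of ``specifically chosen $\mathbf{X}$'' the paper alludes to. The entries of the resulting tail block are then the Lagrange factors $(\widehat\omega_{r}/\widehat\omega_{s})\prod_{t\neq s}(\sigma_{r}^{2}-\sigma_{t}^{2})/(\sigma_{s}^{2}-\sigma_{t}^{2})$ you wrote down, and bounding the spectral norm by the Frobenius norm while extracting the worst ratio $\widehat\omega_{r}/\widehat\omega_{s}$ reproduces the displayed $\mathcal{C}_{b=1}$. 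The distinct-nodes requirement for the Vandermonde to be invertible is precisely the $b=1$ instance of the ``block size at least the largest cluster size'' hypothesis, as you note.

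In short: there is no paper proof to diverge from, and your outline matches both the one-line hint the authors give and the form of the stated constant. The only places to be careful when writing it out are the index bookkeeping you already flag (aligning $j+r$ interpolation nodes with the $k+r=q-p$ columns available) and making explicit that the diagonal $\widehat\omega$ factors cancel on the retained rows and survive only as the ratio on the tail rows.
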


\begin{corollary}[Special case: $b \geq k + r$] \label{cor:bkr}
	For any choices of $r, s \geq 0$, for $j = 1, \cdots, k$,
	\begin{equation}
	\sigma_j \geq \sigma_j\left( \mathbf{B}_k \right) \geq \frac{\sigma_{j+s}}{\sqrt{1 + \mathcal{C}_{b\geq k+r}^2 T_{2q+1}^{-2} \left( 1 + 2 \cdot \frac{\sigma_j - \sigma_{j+s+r+1}}{\sigma_{j+s+r+1}} \right)}}
	\end{equation}
	where
	\begin{equation}
	\mathcal{C}_{b\geq k+r} = \left\Vert \widetilde{\mathbf{\Omega}}_{41} \right\Vert_2 \left\Vert \widetilde{\mathbf{\Omega}}^{-1}_{11} \right\Vert_2
	\end{equation}
	is a constant independent of both $q$, the iteration parameter, and $\mathbf{\Sigma}$, the spectrum of $\mathbf{A}$.  
\end{corollary}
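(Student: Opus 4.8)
The plan is to read Corollary~\ref{cor:bkr} off the argument behind Theorem~\ref{thm:svconvergence} in the regime where one Krylov block already supplies the $k+r$ dominant directions, so that no degree budget is spent threading those directions into the Krylov subspace and the full Chebyshev degree is available for amplification. I would start from the decomposition of $\mathbf{\Omega}$ used in the proof of Theorem~\ref{thm:svconvergence}: written in the right-singular basis of $\mathbf{A}$ and partitioned by row into the dominant group $\{1,\dots,k+r\}$ and the trailing groups, $\mathbf{\Omega}$ contributes a top block $\widetilde{\mathbf{\Omega}}_{11}$ of size roughly $(k+r)\times b$ together with a tail block $\widetilde{\mathbf{\Omega}}_{41}$. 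The general constant $\mathcal{C}$ in \eqref{eqn:svconvergence_withs} factors, heuristically, into (i) a spectral interpolation factor --- a Lagrange-type sum/product over singular-value gaps, the analogue of the one displayed explicitly in the $b=1$ corollary --- which accounts for reaching the dominant subspace through the $q-p$ ``extra'' blocks, and (ii) a factor recording the conditioning of the relevant blocks of $\mathbf{\Omega}$.

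The crux is that when $b \ge k+r$ the top block $\widetilde{\mathbf{\Omega}}_{11}$ has at least as many columns as rows, so under the two conditions of Remark~\ref{lst:conditions} it has full row rank and hence a right inverse $\widetilde{\mathbf{\Omega}}^{-1}_{11}$. Using this right inverse to carry the dominant left-singular directions into the span of the first block $\mathbf{A}\mathbf{\Omega}$ removes the need for the interpolating polynomial entirely: the ``good'' odd polynomial in the analysis may then be taken to be a shift--scaling of $T_{2q+1}$ in full rather than the reduced degree $2p+1$, and the interpolation factor in $\mathcal{C}$ collapses to $1$. What remains is precisely $\mathcal{C}_{b\ge k+r} = \lVert \widetilde{\mathbf{\Omega}}_{41}\rVert_2\,\lVert \widetilde{\mathbf{\Omega}}^{-1}_{11}\rVert_2$.

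To finish, I would substitute this constant and index into \eqref{eqn:svconvergence_withs} and verify the two independence claims: $\widetilde{\mathbf{\Omega}}_{41}$ and $\widetilde{\mathbf{\Omega}}_{11}$ are fixed blocks of the Gaussian matrix obtained by rotating $\mathbf{\Omega}$ into the singular basis of $\mathbf{A}$, whose law does not depend on $\mathbf{\Sigma}$, and neither block depends on $q$; the matching upper bound $\sigma_j \ge \sigma_j(\mathbf{B}_k)$ is just Cauchy interlacing for the orthogonal projection $\mathbf{B} = \mathbf{Q}\mathbf{Q}^T\mathbf{A}$, already noted in Section~\ref{sec:theoretical_results}.

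The step I expect to be the main obstacle is making rigorous the assertion that the interpolation factor truly degenerates to $1$ and the Chebyshev index truly rises to $2q+1$, rather than the $2q-1$ one would obtain by merely setting $q-p=1$ in Theorem~\ref{thm:svconvergence}. This requires re-running the ``artificial gap'' construction with $\widetilde{\mathbf{\Omega}}^{-1}_{11}$ taking over the role played by the interpolating polynomial in the general case, and checking that no residual gap-dependent penalty survives, so that the fat-block bound genuinely coincides with the gap-independent estimate of \cite{musco2015randomized}. Everything else is substitution and bookkeeping.
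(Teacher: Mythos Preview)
Your plan is sound and matches the route the paper itself indicates. The paper does not spell out a proof of this corollary; it only says the corollaries ``are similar in flavor to the proof above and involve constructions of specifically chosen $\mathbf{X}$ matrices in each case.'' What you propose --- take $p=q$ so that $\mathbf{V}_p$ reduces to the single block $\widehat{\mathbf{\Omega}}$, then build $\mathbf{X}$ from a right inverse of the top $(k+r)$-row block of $\widehat{\mathbf{\Omega}}$ --- is exactly such a specially chosen $\mathbf{X}$, and it delivers the full Chebyshev degree $2q+1$ (not $2q-1$) because no Krylov powers are spent assembling the dominant directions. You have also correctly isolated the one genuine technical check: that in this regime the Schur-complement factor $(\mathbf{V}_{41}-\mathbf{V}_{42}\mathbf{V}_{32}^{-1}\mathbf{V}_{31})(\mathbf{V}_{11}-\mathbf{V}_{12}\mathbf{V}_{32}^{-1}\mathbf{V}_{31})^{-1}$ degenerates to the product of blocks of $\widehat{\mathbf{\Omega}}$, yielding the constant $\lVert\widetilde{\mathbf{\Omega}}_{41}\rVert_2\,\lVert\widetilde{\mathbf{\Omega}}_{11}^{-1}\rVert_2$ with no residual dependence on the Vandermonde structure.

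One small sharpening: your justification for ``independence of $\mathbf{\Sigma}$'' via the rotational invariance of the Gaussian \emph{law} is not quite the right argument. The claim in the corollary is deterministic, not distributional: the blocks $\widetilde{\mathbf{\Omega}}_{11}$ and $\widetilde{\mathbf{\Omega}}_{41}$ are sub-blocks of $\widehat{\mathbf{\Omega}}=\mathbf{V}^T\mathbf{\Omega}$, which involves only the right singular vectors $\mathbf{V}$ and the draw $\mathbf{\Omega}$, never $\mathbf{\Sigma}$. That is what makes $\mathcal{C}_{b\ge k+r}$ literally spectrum-free for a fixed $\mathbf{\Omega}$, whereas the general $\mathcal{C}$ in Theorem~\ref{thm:svconvergence} carries $\mathbf{\Sigma}$ through the generalized Vandermonde blocks $\mathbf{V}_{ij}$.
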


Choosing optimally the analysis parameters $r, s$, we arrive at a result coinciding asymptotically with the conclusions reached in \cite{musco2015randomized}.

\begin{theorem}
	Let $\mathbf{B}_k$ be the matrix returned by running Alg.~\ref{alg:blk_lanczos} with the block size $b = k$. Assume $\mathbf{\Omega}$ is chosen such that $\widetilde{\mathbf{\Omega}}_{11}$ is nonsingular. Then, for $j = 1, \cdots, k$
	\begin{equation}
	\sigma_j \geq \sigma_j\left(\mathbf{B}_k \right) \geq \sigma_j e^{\mathcal{O}\left( -\frac{\log (\mathcal{A}(4q+2))^2}{(4q+2)^2} \right)}
	\end{equation}
	where $\mathcal{A} = 2\mathcal{C}_{b\geq k+r}$ is a constant independent of $q$. 
\end{theorem}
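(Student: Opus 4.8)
The upper bound $\sigma_j \ge \sigma_j(\mathbf{B}_k)$ is already in hand from Cauchy interlacing, so only the lower bound needs proving, and the plan is to extract it from Corollary~\ref{cor:bkr} by an opportune choice of the free parameters. Taking $b = k$ forces $r = 0$ in that corollary (the hypothesis $b \ge k+r$ with $b=k$ leaves no room), and its nonsingularity requirement on $\widetilde{\mathbf{\Omega}}_{11}$ is precisely the hypothesis assumed here; so for every shift $s \ge 0$ we get
\begin{equation*}
\sigma_j(\mathbf{B}_k) \;\ge\; \frac{\sigma_{j+s}}{\sqrt{1 + \mathcal{C}_{b\ge k+r}^2 \, T_{2q+1}^{-2}\!\left(1 + 2\cdot\dfrac{\sigma_j - \sigma_{j+s+1}}{\sigma_{j+s+1}}\right)}},
\end{equation*}
where $\mathcal{C}_{b\ge k+r}$ depends on neither $q$ nor the spectrum $\mathbf{\Sigma}$. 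The task is then to choose the shift $s$ --- as a function of the a priori unknown spectrum --- so that $\sigma_{j+s}$ remains close to $\sigma_j$ while the relative gap $(\sigma_j-\sigma_{j+s+1})/\sigma_{j+s+1}$ feeding the Chebyshev polynomial stays bounded below.

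I would accomplish this with the gap-creation device of \cite{musco2015randomized,gu2015subspace}. Fix a tolerance $\epsilon = \epsilon(q) \in (0,1)$, to be pinned down at the end, and let $s^\ast$ be the smallest $s \ge 0$ with $\sigma_{j+s+1} \le (1-\epsilon)\sigma_j$; adopting the convention $\sigma_i = 0$ for $i > \mathrm{rank}(\mathbf{A})$, such an $s^\ast$ always exists. By minimality $\sigma_{j+s^\ast} > (1-\epsilon)\sigma_j$, while $\sigma_{j+s^\ast+1} \le (1-\epsilon)\sigma_j$ gives $\frac{\sigma_j - \sigma_{j+s^\ast+1}}{\sigma_{j+s^\ast+1}} \ge \frac{\epsilon}{1-\epsilon} > \epsilon$ (in the degenerate case $\sigma_{j+s^\ast+1}=0$ the bound above is just $\sigma_j(\mathbf{B}_k) \ge \sigma_{j+s^\ast} > (1-\epsilon)\sigma_j$). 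Since $T_{2q+1}$ is increasing on $[1,\infty)$, substituting $s = s^\ast$ yields
\begin{equation*}
\sigma_j(\mathbf{B}_k) \;\ge\; \frac{(1-\epsilon)\,\sigma_j}{\sqrt{1 + \mathcal{C}_{b\ge k+r}^2 \, T_{2q+1}^{-2}(1 + 2\epsilon)}}.
\end{equation*}

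It remains to lower-bound the Chebyshev term and optimize $\epsilon$. From the closed form, $T_{2q+1}(1+2\epsilon) \ge \tfrac12\bigl(1+2\epsilon+\sqrt{(1+2\epsilon)^2-1}\bigr)^{2q+1} \ge \tfrac12(1+2\sqrt\epsilon)^{2q+1}$, so with $\mathcal{A} = 2\mathcal{C}_{b\ge k+r}$ and $\epsilon$ small,
\begin{equation*}
\mathcal{C}_{b\ge k+r}^2 \, T_{2q+1}^{-2}(1+2\epsilon) \;\le\; \mathcal{A}^2(1+2\sqrt\epsilon)^{-(4q+2)} \;\le\; \mathcal{A}^2 e^{-(2q+1)\sqrt\epsilon},
\end{equation*}
using $\log(1+2x)\ge x$ near $0$. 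Choosing $\sqrt{\epsilon} = c\,\log\!\bigl(\mathcal{A}(4q+2)\bigr)/(4q+2)$ for a suitable absolute constant $c$ makes this $\mathcal{O}\bigl((4q+2)^{-2}\bigr)$, which is dominated by $\epsilon = \Theta\bigl(\log(\mathcal{A}(4q+2))^2/(4q+2)^2\bigr)$. Hence
\begin{equation*}
\sigma_j(\mathbf{B}_k) \;\ge\; \frac{(1-\epsilon)\sigma_j}{\sqrt{1+\mathcal{O}(\epsilon)}} \;=\; \bigl(1-\mathcal{O}(\epsilon)\bigr)\sigma_j \;=\; \sigma_j\, e^{\mathcal{O}(-\epsilon)} \;=\; \sigma_j\, e^{\mathcal{O}\!\left(-\log(\mathcal{A}(4q+2))^2/(4q+2)^2\right)},
\end{equation*}
as claimed.

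The crux --- and the step I expect to be the main obstacle --- is the middle one: choosing the shift $s^\ast$ and calibrating $\epsilon$ against $q$ so that a single bound holds uniformly over all input matrices. This uniformity is what licenses the $\mathcal{O}(\cdot)$, and it hinges on the assertion of Corollary~\ref{cor:bkr} that $\mathcal{C}_{b\ge k+r}$, hence $\mathcal{A}$, is independent of both $q$ and $\mathbf{\Sigma}$. A secondary technicality is the behaviour at the bottom of the spectrum when no relative gap of size $\epsilon$ sits inside the admissible index range; this is harmless, being either vacuous under the $\sigma_i = 0$ convention or absorbed by the regime in which the Krylov subspace already spans the column space of $\mathbf{A}$ and $\mathbf{B}_k$ is exact.
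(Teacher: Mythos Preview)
Your approach is correct and matches the paper's own (unwritten) argument: the paper does not give an explicit proof of this theorem but only the one-line hint ``Choosing optimally the analysis parameters $r,s$, we arrive at a result coinciding asymptotically with the conclusions reached in \cite{musco2015randomized},'' and that is exactly what you carry out --- specialize Corollary~\ref{cor:bkr} with $r=0$ (forced by $b=k$), then pick $s$ via the Musco--Musco/Gu gap-creation trick and balance $\epsilon$ against $q$.

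One point worth flagging, which the paper also elides: your uniformity argument requires that $\mathcal{C}_{b\ge k+r}=\Vert\widetilde{\mathbf{\Omega}}_{41}\Vert_2\Vert\widetilde{\mathbf{\Omega}}_{11}^{-1}\Vert_2$ be bounded independently of the shift $s$, since $s^\ast$ varies with $\epsilon(q)$. Corollary~\ref{cor:bkr} only asserts independence from $q$ and $\mathbf{\Sigma}$, not from $s$. This is easily repaired --- as $s$ grows the block $\widetilde{\mathbf{\Omega}}_{41}$ is a submatrix of the $s=0$ case, so its spectral norm is monotone nonincreasing in $s$, while $\widetilde{\mathbf{\Omega}}_{11}$ does not move --- but it deserves a sentence. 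A second cosmetic slip: from $\log(1+2x)\ge x$ you get $(1+2\sqrt\epsilon)^{-(4q+2)}\le e^{-(4q+2)\sqrt\epsilon}$, not $e^{-(2q+1)\sqrt\epsilon}$; the factor of two is immaterial for the $\mathcal{O}(\cdot)$ conclusion.
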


Finally, from Theorem~\ref{thm:svconvergence} we may derive the following result, which states that for certain matrices with singular spectrum rapidly decaying to $0$, the RBL algorithm converges superlinearly.

\begin{theorem}
	\label{thm:superlinear}
	Assume the singular value spectrum of $\mathbf{A}$ decays such that $\sigma_i \rightarrow 0$. Let $\mathbf{B}_k$ be the rank $k$ approximation of $\mathbf{A}$ returned by Alg.~\ref{alg:blk_lanczos}. Assume additionally that the hypothesis and notation of Theorem~\ref{thm:svconvergence} hold. Then
	\begin{equation}
	\sigma_j (\mathbf{B}_k) \rightarrow \sigma_j
	\end{equation}
	superlinearly in $q$, the number of iterations. 
\end{theorem}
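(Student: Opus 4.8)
The plan is to derive the claimed superlinear convergence directly from Theorem~\ref{thm:svconvergence} by letting the analysis parameters in \eqref{eqn:svconvergence_withs} vary with $q$ so as to exploit the hypothesis $\sigma_i \to 0$. Fix $j \in \{1,\dots,k\}$ and take $s = 0$. For each $q$ and each admissible pair $(r,p)$ with $k + r = (q-p)b$, Theorem~\ref{thm:svconvergence} gives
\begin{equation}
\sigma_j \;\ge\; \sigma_j(\mathbf{B}_k) \;\ge\; \frac{\sigma_j}{\sqrt{1 + \varepsilon_q}}, \qquad \varepsilon_q \;:=\; \mathcal{C}^2\, T_{2p+1}^{-2}\!\left(1 + 2\,\frac{\sigma_j - \sigma_{j+r+1}}{\sigma_{j+r+1}}\right).
\end{equation}
Since $1 - (1+\varepsilon_q)^{-1/2} \le \tfrac12\varepsilon_q$, we get $0 \le \sigma_j - \sigma_j(\mathbf{B}_k) \le \tfrac12 \sigma_j\, \varepsilon_q$, so it suffices to prove that $\varepsilon_q$ tends to $0$ faster than any geometric sequence, i.e. $\varepsilon_q^{1/q} \to 0$; that is precisely the statement that $\sigma_j(\mathbf{B}_k) \to \sigma_j$ superlinearly in $q$.

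Next I would bound the Chebyshev term from below. Put $\gamma_r := \dfrac{\sigma_j - \sigma_{j+r+1}}{\sigma_{j+r+1}} = \dfrac{\sigma_j}{\sigma_{j+r+1}} - 1 \ge 0$. From the closed form for $|x| > 1$, one has $T_m(1 + 2\gamma_r) \ge \tfrac12 (1 + 2\gamma_r)^m$, hence
\begin{equation}
\varepsilon_q \;\le\; 4\,\mathcal{C}^2\,(1 + 2\gamma_r)^{-(4p+2)}.
\end{equation}
Now pick an admissible schedule $r = r(q)$ with $r(q) \to \infty$ and $r(q) = o(q)$, and set $p = p(q) = q - \tfrac{k+r(q)}{b}$; then $p(q) \to \infty$ and $p(q)/q \to 1$, so $\tfrac{4p(q)+2}{q}$ is bounded below by a positive constant. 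Because $\sigma_i \to 0$, we have $\sigma_{j + r(q) + 1} \to 0$ and therefore $\gamma_{r(q)} \to \infty$, however slowly $r(q)$ grows. Consequently
\begin{equation}
-\frac1q \log \varepsilon_q \;\ge\; \frac{4p(q)+2}{q}\,\log\!\bigl(1 + 2\gamma_{r(q)}\bigr) \;-\; \frac{\log(4\mathcal{C}^2)}{q} \;\longrightarrow\; \infty,
\end{equation}
provided $\mathcal{C} = \mathcal{C}(r(q))$ grows sub-exponentially in $q$; this gives $\varepsilon_q^{1/q} \to 0$ and hence the claim.

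The main obstacle is the last proviso: the constant $\mathcal{C}$ in Theorem~\ref{thm:svconvergence} is independent of $q$ only for a \emph{fixed} choice of $(r,s)$, and it depends on $r$ through the dimensions of the blocks of $\widetilde{\mathbf{\Omega}}$ (compare the explicit forms $\mathcal{C}_{b=1}$ and $\mathcal{C}_{b\ge k+r} = \|\widetilde{\mathbf{\Omega}}_{41}\|_2\,\|\widetilde{\mathbf{\Omega}}_{11}^{-1}\|_2$ in the corollaries), so one must verify it cannot overwhelm the Chebyshev decay along the schedule $r(q)$. The matrix $\mathbf{\Omega} \in \mathbb{R}^{n\times b}$ is fixed as $q$ varies; the quantities entering $\mathcal{C}(r)$ are norms and (pseudo)inverse norms of $r$-dependent submatrices built from it, which — under the genericity guaranteed by the two conditions of Remark~\ref{lst:conditions}, valid almost surely for a Gaussian $\mathbf{\Omega}$ — can be bounded polynomially in $r$, hence sub-exponentially in $q$ since $r(q) = o(q)$. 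Pinning down this polynomial bound on $\mathcal{C}(r)$, together with checking that an admissible schedule with $r(q) \to \infty$, $r(q) = o(q)$ exists subject to the lattice constraint $b \mid (k + r(q))$, are the only non-routine steps; the rest is a direct substitution into \eqref{eqn:svconvergence_withs}. Finally, one should note the degenerate case: if $\mathrm{rank}(\mathbf{A})$ is finite, then once $j + r(q) + 1 > \mathrm{rank}(\mathbf{A})$ we have $\sigma_{j+r(q)+1} = 0$, $\gamma_{r(q)} = \infty$, and $\varepsilon_q = 0$ (exact recovery); the substantive regime is a spectral tail that decays to $0$ while staying positive.
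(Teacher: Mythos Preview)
Your approach is essentially the paper's: invoke Theorem~\ref{thm:svconvergence} with $s=0$, use Chebyshev growth to bound the error term, and let $r$ grow so that the gap $\gamma_r = \sigma_j/\sigma_{j+r+1} - 1 \to \infty$ under the hypothesis $\sigma_i \to 0$. The paper, however, argues via the ratio test with a \emph{fixed} $r$: it computes $a_{q+1}/a_q \approx (1+g+\sqrt{2g})^{-2}$ and then observes that this constant can be made arbitrarily small by taking $r$ large---strictly speaking this does not yet yield $\lim_{q\to\infty} a_{q+1}/a_q = 0$, and the paper itself concedes the argument is only rigorous for infinite-dimensional operators. Your moving schedule $r = r(q)$ with $r(q)\to\infty$, $r(q)=o(q)$, combined with the root-test formulation $\varepsilon_q^{1/q}\to 0$, is a cleaner way to make the same heuristic precise. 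The obstacle you single out---the $r$-dependence of $\mathcal{C}$---is real, and the paper sidesteps it too (it simply writes $\mathcal{C}(r)$ without tracking growth). Your proposed fix (``polynomial in $r$ via Gaussian genericity'') is not immediate, though: the blocks $\mathbf{V}_{ij}$ come from the generalized Vandermonde matrix $\mathbf{V}_p$ of width $(q-p+1)b = k+r+b$, so $\mathcal{C}(r)$ involves inverse norms of Vandermonde-type submatrices whose conditioning can deteriorate badly with size; controlling this would require more than a genericity appeal, and neither you nor the paper actually closes this gap.
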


This theorem validates long observed empirical behaviors of block Lanczos algorithms. In Section~\ref{sec:numerical_experiments}, we show two examples of typical data matrices with spectrums that fall under this regime, and the expected superlinear convergence behavior.

\subsection{Intuition}
Our analysis makes use of the following three ideas:

\begin{figure}
	\centering
	\caption{Chebyshev polynomials $T_n(x)$ grow much faster than monomials of the same degree $M_n(x) = x^n$ in the interval $\vert x \vert > 1$.}
	\label{fig:chebyshev}
	\includegraphics[scale=0.6]{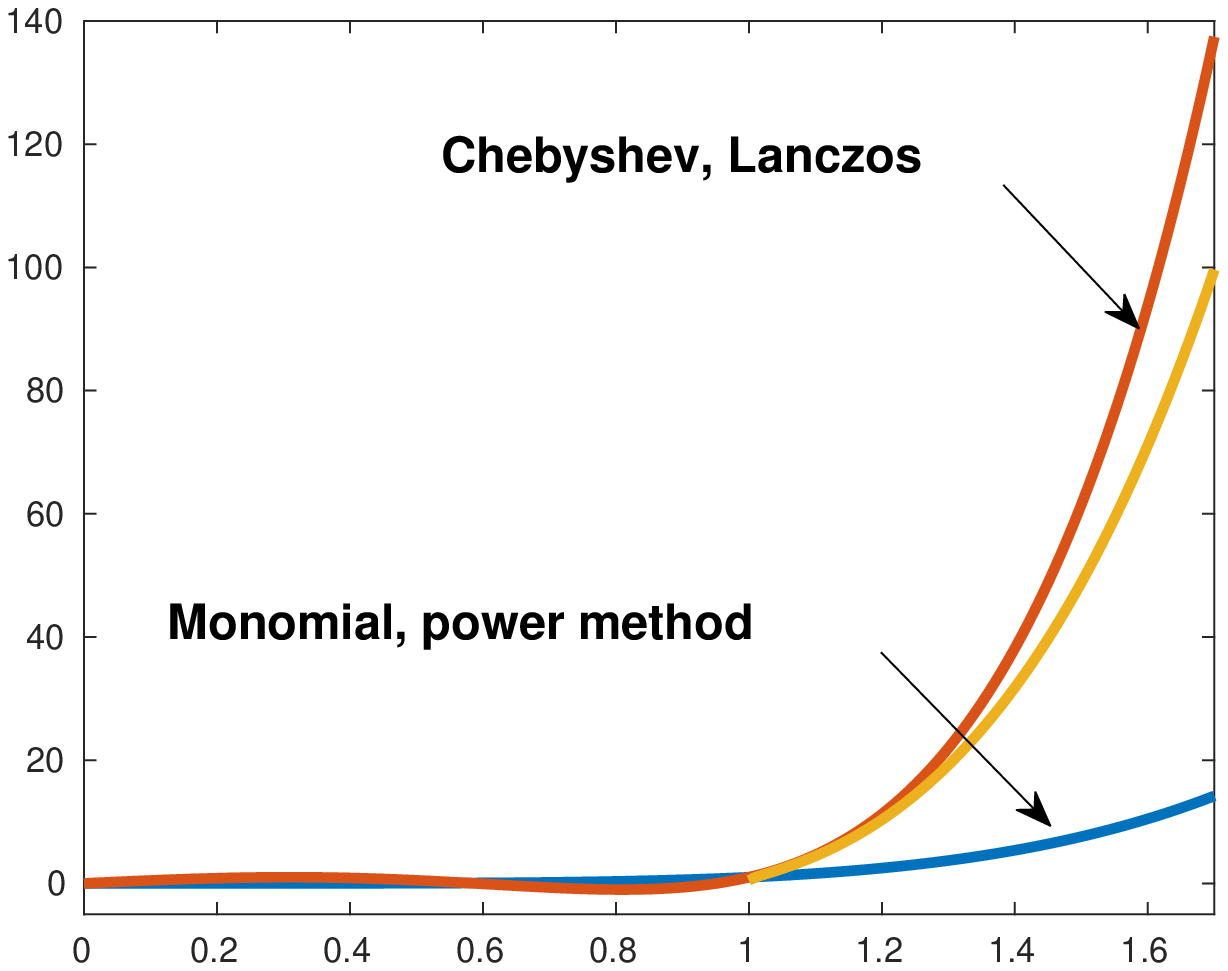}
\end{figure}

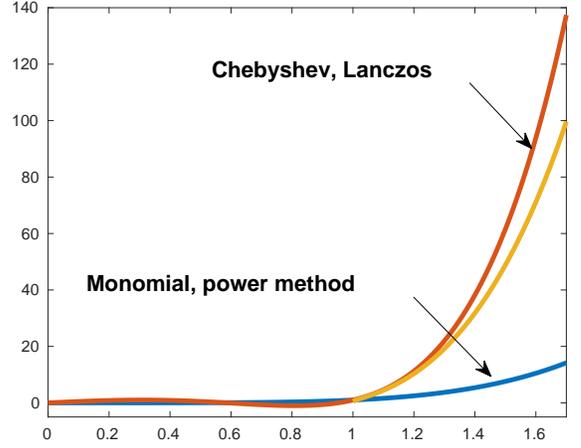
\begin{figure}
	\centering
	\caption{Auxiliary analysis parameters $r, s$ are adjusted to create a sufficient singular spectrum ``gap'' to drive convergence.}
	\label{fig:params}
	\begin{tikzpicture}[scale=0.7]
	% timeline
	\draw [thick, -] (0,0) -- (12,0);
	\draw [thick, -] (1.44, -0.2) -- (1.44, 0.2);
	\node[align=center] at (1.44, -0.5) {0};
	\draw [thick, -] (2.88, -0.2) -- (2.88, 0.2);
	\node[align=center] at (2.88, -0.5) {$\sigma_n$};
	\draw [thick, -] (6.00, -0.2) -- (6.00, 0.2);
	\node[align=center] at (6.00, -0.5) {$\sigma_{k+s+r+1}$};
	\draw [thick, -] (8.40, -0.2) -- (8.40, 0.2);
	\node[align=center] at (8.40, +0.5) {$\sigma_{k+s}$};
	\draw [thick, -] (8.64, -0.2) -- (8.64, 0.2);
	\node[align=center] at (8.64, -0.5) {$\sigma_k$};
	\draw [thick, -] (10.08, -0.2) -- (10.08, 0.2);
	\node[align=center] at (10.08, -0.5) {$\sigma_1$};
	% markings
	\draw [thin, -] (8.60, -0.1) -- (8.60, 0.1);
	\draw [thin, -] (8.55, -0.1) -- (8.55, 0.1);
	\draw [thin, -] (8.50, -0.1) -- (8.50, 0.1);
	\draw [thin, -] (8.45, -0.1) -- (8.45, 0.1);
	% brace labling
	\draw[decoration={brace,mirror,raise=5pt},decorate]	(6.00,-0.7) -- (8.64,-0.7);
	\node[align=center] at (7.44, -1.5) {``gap''};
	\end{tikzpicture}
\end{figure}

\begin{enumerate}
	\item the growth behavior of Chebyshev polynomials, a traditional ingredient in the analysis of Lanczos iteration methods, (Fig.~\ref{fig:chebyshev})
	\item the choice of a clever orthonormal basis for the Krylov subspace, an idea adapted from \cite{gu2015subspace},
	\item the creation of a spectrum ``gap'', by separating the spectrum of $\mathbf{A}$ into those singular values that are ``close'' to $\sigma_k$, and those that are sufficiently smaller in magnitude, using auxiliary analysis parameters $r,s$. (Fig.~\ref{fig:params})
\end{enumerate}

\subsection{Analysis} \label{ssec:analysis}

We are interested in the column span of the Krylov subspace matrix $\mathbf{K}$. Let the singular value decomposition of $\mathbf{A}$ be denoted as $\mathbf{A} = \mathbf{U} \mathbf{\Sigma} \mathbf{V}^T$. Then, we may write 
\begin{align}
\mathbf{K} &= \begin{bmatrix} \mathbf{A} \mathbf{\Omega} & (\mathbf{A} \mathbf{A}^T) \mathbf{A} \mathbf{\Omega} & \cdots & (\mathbf{A} \mathbf{A}^T)^q \mathbf{A} \mathbf{\Omega} \end{bmatrix} \nonumber \\
&= \begin{bmatrix} \mathbf{U} \mathbf{\Sigma} \mathbf{V}^T \mathbf{\Omega} & \mathbf{U} \mathbf{\Sigma}^{2+1} \mathbf{V}^T \mathbf{\Omega} & \cdots & \mathbf{U} \mathbf{\Sigma}^{2q+1} \mathbf{V}^T \mathbf{\Omega} \end{bmatrix} \nonumber \\
&= \mathbf{U} \mathbf{\Sigma} \begin{bmatrix} \widehat{\mathbf{\Omega}} & \widehat{\mathbf{\Sigma}} \widehat{\mathbf{\Omega}} & \cdots & \widehat{\mathbf{\Sigma}}^q \widehat{\mathbf{\Omega}} \end{bmatrix}
\end{align}
where for notational convenience we have defined the quantities $\widehat{\mathbf{\Omega}} \equiv \mathbf{V}^T \mathbf{\Omega}$ and $\widehat{\mathbf{\Sigma}} \equiv \mathbf{\Sigma}^2$.

We ``factor out'' the component of the Krylov subspace that drives convergence from the component that is related to the initial starting subspace but independent of $q$. To this end, define for $0 \leq p \leq q$,
\begin{equation}
\mathbf{K}_p \equiv \mathbf{U} T_{2p+1} ( \mathbf{\Sigma} ) \begin{bmatrix} \widehat{\mathbf{\Omega}} & \widehat{\mathbf{\Sigma}} \widehat{\mathbf{\Omega}} & \cdots & \widehat{\mathbf{\Sigma}}^{q-p} \widehat{\mathbf{\Omega}} \end{bmatrix}  \label{eqn:kp}
\end{equation} 

The matrices $\mathbf{K}$ and $\widehat{\mathbf{K}}$ are related as 
\begin{equation}
\mathrm{span}\left\{ \mathbf{K}_p \right\} \subseteq \mathrm{span} \left\{ \mathbf{K} \right\}
\end{equation}
In light of this, since Step 3 of Alg.~\ref{alg:blk_lanczos} is a projection, we are justified in our analysis to work with $\mathbf{K}_p$ instead of the more complicated $\mathbf{K}$. 

Next, we multiply $\mathbf{K}_p$ by a specially constructed, full rank matrix $\mathbf{X}$. This operation will preserve the subspace spanned by the columns of $\mathbf{K}_p$, but align, as much as possible, the first $k$ columns to the direction of the leading $k$ singular vectors. 

For all $0 \leq p \leq q$, let
\begin{equation}
\mathbf{V}_p \equiv \begin{bmatrix} \widehat{\mathbf{\Omega}} & \widehat{\mathbf{\Sigma}} \widehat{\mathbf{\Omega}} & \cdots & \widehat{\mathbf{\Sigma}}^{q-p} \widehat{\mathbf{\Omega}} \end{bmatrix}
\end{equation}
denote the generalized Vandermonde matrix from Eqn.~\ref{eqn:kp} and partition this matrix as follows:
\begin{equation}
\mathbf{V}_p = \begin{bmatrix} \mathbf{V}_{11} & \mathbf{V}_{12} \\ \mathbf{V}_{21} & \mathbf{V}_{22} \\ \mathbf{V}_{31} & \mathbf{V}_{32} \\ \mathbf{V}_{41} & \mathbf{V}_{42} \end{bmatrix} \label{eqn:vp}
\end{equation}
where the blocks in the first dimension are sized $k, s, r, t = n - (k+s+r)$ and the blocks in the second dimension are sized $k, r$. Intuitively, $s$ is used to handle duplicate or clustered singular values, while $r$ is used to create the ``gap'' that drives convergence (Fig.~\ref{fig:params}). With this partition, we examine the convergence behavior viewed as an accentuation of the ``gap'' by the appropriate Chebysehv polynomial. 

We show the existence of a(t least one) special non-singular $\mathbf{X} \in \mathbb{R}^{(k+r) \times (k+r)}$ such that 
\begin{align}
\mathbf{K}_p \mathbf{X} &= \mathbf{U} T_{2p+1}(\mathbf{\Sigma}) \mathbf{V}_p \mathbf{X} \\
&= \mathbf{U} \begin{bmatrix} \mathbf{Q}_{11} & \widehat{\mathbf{V}}_{12} \\ \mathbf{Q}_{21} & \widehat{\mathbf{V}}_{22} \\ \mathbf{0} & \widehat{\mathbf{V}}_{32} \\ \mathbf{H} & \widehat{\mathbf{V}}_{42} \end{bmatrix} \label{eqn:gapmatrix}
\end{align}
with $\begin{bmatrix} \mathbf{Q}_{11} \\ \mathbf{Q}_{21} \end{bmatrix}$ a column orthogonal matrix. Notice the ``gap'' in the $(3,1)$ block of size $r$ is created by using $\mathbf{X}$ to align the columns of $\mathbf{K}_p$.

We explicit construct such an $\mathbf{X}$. Partition
\begin{align}
\mathbf{X} &= \begin{bmatrix} \mathbf{X}_{11} & \mathbf{X}_{12} \\ \mathbf{X}_{21} & \mathbf{X}_{22} \end{bmatrix} \label{eqn:xblocked} \\
\mathbf{\Sigma} &= \begin{bmatrix} \mathbf{\Sigma}_1 & & & \\ & \mathbf{\Sigma}_2 & & \\ & & \mathbf{\Sigma}_3 & \\ & & & \mathbf{\Sigma}_4 \end{bmatrix}
\end{align}
where each dimension of $\mathbf{X}$ is sized $k, r$, and each dimension of $\mathbf{\Sigma}$ is sized $k, s, r, t=n-(k+s+r)$. Then,
\begin{align}
T_{2p+1}(\mathbf{\Sigma}) \mathbf{V}_p \mathbf{X} \equiv \left[ \begin{array}{c|c} \begin{pmatrix} \widehat{\mathbf{V}}_{11} \\ \widehat{\mathbf{V}}_{21} \end{pmatrix} & \cdots \\ \hline \widehat{\mathbf{V}}_{31} & \cdots \\ \hline \widehat{\mathbf{V}}_{41} & \cdots \end{array} \right]
\end{align} 
where 
\begin{align*}
\begin{pmatrix} \widehat{\mathbf{V}}_{11} \\ \widehat{\mathbf{V}}_{21} \end{pmatrix} &= \begin{pmatrix}  T_{2p+1}(\mathbf{\Sigma}_1) & \\ &  T_{2p+1}(\mathbf{\Sigma}_2) \end{pmatrix} \begin{pmatrix} \mathbf{V}_{11} & \mathbf{V}_{12} \\ \mathbf{V}_{21} & \mathbf{V}_{22} \end{pmatrix} \begin{pmatrix} \mathbf{X}_{11} \\ \mathbf{X}_{21} \end{pmatrix} \\
\widehat{\mathbf{V}}_{31} &= T_{2p+1}(\mathbf{\Sigma}_3) (\mathbf{V}_{31} \mathbf{X}_{11} + \mathbf{V}_{32} \mathbf{X}_{21}) \\
\widehat{\mathbf{V}}_{41} &=  T_{2p+1}(\mathbf{\Sigma}_4) (\mathbf{V}_{41} \mathbf{X}_{11} + \mathbf{V}_{42} \mathbf{X}_{21})
\end{align*}

Setting 
\begin{equation}
\mathbf{X}_{21} = - \mathbf{V}_{32}^{-1} \mathbf{V}_{31} \mathbf{X}_{11} \label{eqn:x21def}
\end{equation}
ensures the $(2,1)$ block of dimensions $r \times k$ to be $\widehat{\mathbf{V}}_{31} = \mathbf{0}$, and causes the $(1,1)$ block of dimensions $(k+s) \times k$ to become
\begin{align*}
\begin{pmatrix} \widehat{\mathbf{V}}_{11} \\ \widehat{\mathbf{V}}_{21} \end{pmatrix} = \begin{bmatrix} T_{2p+1}(\mathbf{\Sigma}_1) & \\ &  T_{2p+1}(\mathbf{\Sigma}_2) \end{bmatrix} \begin{bmatrix}  \mathbf{V}_{11} - \mathbf{V}_{12} \mathbf{V}_{32}^{-1} \mathbf{V}_{31} \\ \mathbf{V}_{21} - \mathbf{V}_{22} \mathbf{V}_{32}^{-1} \mathbf{V}_{31} \end{bmatrix} \mathbf{X}_{11}
\end{align*}

We can then take the QR factorization
\begin{equation}
\widetilde{\mathbf{Q}}\widetilde{\mathbf{R}} = \begin{bmatrix}  T_{2p+1}(\mathbf{\Sigma}_1) & \\ & T_{2p+1}(\mathbf{\Sigma}_2) \end{bmatrix} \begin{bmatrix}  \mathbf{V}_{11} - \mathbf{V}_{12} \mathbf{V}_{32}^{-1} \mathbf{V}_{31} \\ \mathbf{V}_{21} - \mathbf{V}_{22} \mathbf{V}_{32}^{-1} \mathbf{V}_{31} \end{bmatrix} \label{eqn:rtildedef}
\end{equation}
and set 
\begin{equation}
\mathbf{X}_{11} = \widetilde{\mathbf{R}}^{-1} \label{eqn:x11def}
\end{equation}

This ensures that
\begin{equation}
\begin{pmatrix} \widehat{\mathbf{V}}_{11} \\ \widehat{\mathbf{V}}_{21} \end{pmatrix}^T \begin{pmatrix} \widehat{\mathbf{V}}_{11} \\ \widehat{\mathbf{V}}_{21} \end{pmatrix} = \left( \widetilde{\mathbf{Q}}\widetilde{\mathbf{R}}\widetilde{\mathbf{R}}^{-1} \right)^T \left( \widetilde{\mathbf{Q}}\widetilde{\mathbf{R}}\widetilde{\mathbf{R}}^{-1} \right) = \mathbf{I}
\end{equation}

Let Eqn.~(\ref{eqn:x11def}) and Eqn.~(\ref{eqn:x21def}) define $\mathbf{X}_{11}$ and $\mathbf{X}_{21}$ respectively as 
\begin{equation}
\begin{bmatrix} \mathbf{X}_{11} \\ \mathbf{X}_{21} \end{bmatrix} = \begin{bmatrix} \mathbf{I} \\ -\mathbf{V}_{32}^{-1} \mathbf{V}_{31} \end{bmatrix} \widetilde{\mathbf{R}}^{-1} \label{eqn:xdefcol2}
\end{equation}
We specify 
\begin{equation}
\begin{bmatrix} \mathbf{X}_{12} \\ \mathbf{X}_{22} \end{bmatrix} \equiv \begin{bmatrix} \mathbf{X}_{11} \\ \mathbf{X}_{21} \end{bmatrix}^{\perp} \label{eqn:xdefcol1}
\end{equation}
to provide a complete description of $\mathbf{X}$ which satisfies Eqn.~\ref{eqn:gapmatrix}.

\begin{remark}
	In order for the above derivation and thus Eqn.~(\ref{eqn:xdefcol1}) and Eqn.~(\ref{eqn:xdefcol2}) to be valid, the following conditions must hold: $\mathbf{\Omega}$ is chosen to allow 
	\begin{itemize}
		\item $\mathbf{V}_{32}$ to be non-singular and thus invertible,
		\item $\mathbf{V}_{11}-\mathbf{V}_{12} \mathbf{V}_{32}^{-1} \mathbf{V}_{31}$ to be non-singular and thus $\widetilde{\mathbf{R}}$ to be invertible. Note that this expression is the Schur complement of the $(k+r) \times (k+r)$ matrix $\begin{bmatrix} \mathbf{V}_{11} & \mathbf{V}_{12} \\ \mathbf{V}_{31} & \mathbf{V}_{32} \end{bmatrix}$ with respect to the $\mathbf{V}_{32}$ block. 
	\end{itemize}
	\label{lst:conditions}
\end{remark}

We present a first result on a lower bound for the singular value of $\mathbf{B}_k$. 
\begin{lemma}
	\label{lem:svconvlemma}
	Let $\mathbf{B}_k$ be the matrix returned by Alg.~\ref{alg:blk_lanczos}, let $\mathbf{H}$ be as defined in Eqn.~(\ref{eqn:gapmatrix}), and assume that the two conditions in Remark~\ref{lst:conditions} hold. Then,
	\begin{equation}
	\sigma_k(\mathbf{B}_k) \geq \frac{\sigma_{k+s}}{\sqrt{1 + \Vert \mathbf{H} \Vert_2^2}}
	\end{equation}
\end{lemma}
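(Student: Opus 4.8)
\emph{Proof proposal.} The plan is to build an explicit $m\times k$ matrix $\mathbf{Z}$ with orthonormal columns lying entirely inside $\mathrm{span}\{\mathbf{Q}\}$, and then transfer a lower bound on $\sigma_k(\mathbf{Z}^T\mathbf{A})$ to $\sigma_k(\mathbf{B}_k)$. Take the first $k$ columns of the matrix in Eqn.~(\ref{eqn:gapmatrix}) and set
\[
\mathbf{G}\;\equiv\;\mathbf{U}\begin{bmatrix}\mathbf{Q}_{11}\\ \mathbf{Q}_{21}\\ \mathbf{0}\\ \mathbf{H}\end{bmatrix},
\qquad
\mathbf{Z}\;\equiv\;\mathbf{G}\,(\mathbf{I}+\mathbf{H}^T\mathbf{H})^{-1/2}.
\]
Since $\mathrm{span}\{\mathbf{G}\}\subseteq\mathrm{span}\{\mathbf{K}_p\}\subseteq\mathrm{span}\{\mathbf{K}\}=\mathrm{span}\{\mathbf{Q}\}$, and since $\begin{bmatrix}\mathbf{Q}_{11}\\ \mathbf{Q}_{21}\end{bmatrix}$ is column orthonormal while the $(3,1)$ block is zero, the Gram matrix collapses to $\mathbf{G}^T\mathbf{G}=\mathbf{I}+\mathbf{H}^T\mathbf{H}$, which is positive definite; hence $\mathbf{G}$ has full column rank $k$ and $\mathbf{Z}$ has $k$ orthonormal columns, all lying in $\mathrm{span}\{\mathbf{Q}\}$. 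The two conditions in Remark~\ref{lst:conditions} are precisely what make $\mathbf{X}$, and therefore $\mathbf{G}$ and $\mathbf{Z}$, well defined.

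Next I would reduce $\sigma_k(\mathbf{B}_k)$ to $\sigma_k(\mathbf{Z}^T\mathbf{A})$. Because $\mathbf{B}_k$ retains the $k$ largest singular values of $\mathbf{B}=\mathbf{Q}\mathbf{Q}^T\mathbf{A}$, we have $\sigma_k(\mathbf{B}_k)=\sigma_k(\mathbf{B})$ as soon as $\mathrm{rank}(\mathbf{B})\ge k$ (which holds a posteriori; the case $\sigma_{k+s}=0$ is trivial). Since the columns of $\mathbf{Z}$ lie in $\mathrm{span}\{\mathbf{Q}\}$ we have $\mathbf{Q}\mathbf{Q}^T\mathbf{Z}=\mathbf{Z}$, hence $\mathbf{Z}^T\mathbf{B}=\mathbf{Z}^T\mathbf{Q}\mathbf{Q}^T\mathbf{A}=\mathbf{Z}^T\mathbf{A}$; and since $\mathbf{Z}^T$ has orthonormal rows it is a contraction, so $\|\mathbf{Z}^T\mathbf{B}v\|\le\|\mathbf{B}v\|$ for every $v$, and the min--max characterization of singular values gives $\sigma_k(\mathbf{B})\ge\sigma_k(\mathbf{Z}^T\mathbf{B})=\sigma_k(\mathbf{Z}^T\mathbf{A})$ (equivalently, invoke Cauchy interlacing). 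It remains to bound $\sigma_k(\mathbf{Z}^T\mathbf{A})$ from below.

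For that, expand $\mathbf{A}=\mathbf{U}\mathbf{\Sigma}\mathbf{V}^T$ with $\mathbf{\Sigma}=\mathrm{diag}(\mathbf{\Sigma}_1,\mathbf{\Sigma}_2,\mathbf{\Sigma}_3,\mathbf{\Sigma}_4)$, which gives
\[
\mathbf{Z}^T\mathbf{A}=(\mathbf{I}+\mathbf{H}^T\mathbf{H})^{-1/2}\begin{bmatrix}\mathbf{Q}_{11}^T\mathbf{\Sigma}_1 & \mathbf{Q}_{21}^T\mathbf{\Sigma}_2 & \mathbf{0} & \mathbf{H}^T\mathbf{\Sigma}_4\end{bmatrix}\mathbf{V}^T .
\]
Dropping the orthogonal factor $\mathbf{V}^T$; peeling off the square nonsingular factor via $\sigma_k(\mathbf{M}_1\mathbf{M}_2)\ge\sigma_{\min}(\mathbf{M}_1)\,\sigma_k(\mathbf{M}_2)$ with $\sigma_{\min}\big((\mathbf{I}+\mathbf{H}^T\mathbf{H})^{-1/2}\big)=(1+\|\mathbf{H}\|_2^2)^{-1/2}$; and then deleting the last two block columns (which, by monotonicity of $\mathbf{M}\mathbf{M}^T$ under appending columns, can only decrease $\sigma_k$), the claim reduces to $\sigma_k\big(\begin{bmatrix}\mathbf{Q}_{11}^T\mathbf{\Sigma}_1 & \mathbf{Q}_{21}^T\mathbf{\Sigma}_2\end{bmatrix}\big)\ge\sigma_{k+s}$. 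Writing this block as $\begin{bmatrix}\mathbf{Q}_{11}\\ \mathbf{Q}_{21}\end{bmatrix}^T\mathrm{diag}(\mathbf{\Sigma}_1,\mathbf{\Sigma}_2)$, its transpose applied to a unit $y\in\mathbb{R}^k$ has norm $\big\|\mathrm{diag}(\mathbf{\Sigma}_1,\mathbf{\Sigma}_2)\begin{bmatrix}\mathbf{Q}_{11}\\ \mathbf{Q}_{21}\end{bmatrix}y\big\|\ge\sigma_{k+s}\big\|\begin{bmatrix}\mathbf{Q}_{11}\\ \mathbf{Q}_{21}\end{bmatrix}y\big\|=\sigma_{k+s}$, using column orthonormality of $\begin{bmatrix}\mathbf{Q}_{11}\\ \mathbf{Q}_{21}\end{bmatrix}$ and $\sigma_{\min}(\mathrm{diag}(\mathbf{\Sigma}_1,\mathbf{\Sigma}_2))=\sigma_{k+s}$. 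Chaining the inequalities yields $\sigma_k(\mathbf{B}_k)\ge\sigma_{k+s}/\sqrt{1+\|\mathbf{H}\|_2^2}$.

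I do not anticipate a genuine obstacle here: the argument is a chain of elementary singular-value facts — the contraction bound, column deletion, the square-factor inequality, and the diagonal lower bound — applied to the structured matrix of Eqn.~(\ref{eqn:gapmatrix}). The step demanding the most care is bookkeeping the direction of each inequality, namely ensuring that passing from $\mathbf{B}$ to $\mathbf{Z}^T\mathbf{B}$ and from the four-block row to its first two blocks both \emph{lower} bound $\sigma_k$, together with the mild caveat that $\mathrm{rank}(\mathbf{B})\ge k$, which is automatic whenever $\sigma_{k+s}>0$.
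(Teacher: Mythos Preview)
Your argument is correct and follows essentially the same route as the paper: both orthonormalize the first $k$ columns $\mathbf{G}=\mathbf{U}\begin{bmatrix}\mathbf{Q}_{11}^T & \mathbf{Q}_{21}^T & \mathbf{0} & \mathbf{H}^T\end{bmatrix}^T$ of $\mathbf{K}_p\mathbf{X}$, note that the resulting Gram matrix is $\mathbf{I}+\mathbf{H}^T\mathbf{H}$, and then chain Cauchy interlacing with the bound $\sigma_k\big(\begin{bmatrix}\mathbf{Q}_{11}^T\mathbf{\Sigma}_1 & \mathbf{Q}_{21}^T\mathbf{\Sigma}_2\end{bmatrix}\big)\ge\sigma_{k+s}$. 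The only cosmetic differences are that the paper orthonormalizes via a QR factor $\widehat{\mathbf{R}}_{11}$ (with $\widehat{\mathbf{R}}_{11}^T\widehat{\mathbf{R}}_{11}=\mathbf{I}+\mathbf{H}^T\mathbf{H}$) rather than your symmetric square root, and it first passes through the full projector $\widehat{\mathbf{Q}}_p\widehat{\mathbf{Q}}_p^T\mathbf{A}$ before interlacing down to the $k$-column block, whereas you go straight to $\mathbf{Z}^T\mathbf{A}$.
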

\begin{proof}
	The matrix returned by Alg.~\ref{alg:blk_lanczos} is the $k$-truncated SVD of $\mathbf{Q}\mathbf{Q}^T \mathbf{A}$, where the columns of $\mathbf{Q}$ are an orthonormal basis for the column span of $\mathbf{K}$. By construction, it follows that
	\begin{equation}
	\sigma_k(\mathbf{B}_k) \geq \sigma_k \left( \widehat{\mathbf{Q}}_p \widehat{\mathbf{Q}}_p^T \mathbf{A} \right) \label{eqn:sveqn1}
	\end{equation}
	where $\widehat{\mathbf{Q}}_p$ contains columns that form an orthonormal basis for the column span of $\mathbf{K}_p \mathbf{X}$. 
	
	In particular, let $\widehat{\mathbf{Q}}_p \widehat{\mathbf{R}}_p$ be the QR factorization of $\mathbf{K}_p \mathbf{X}$, partitioned as follows:
	\begin{equation}
	\mathbf{K}_p \mathbf{X} = \widehat{\mathbf{Q}}_p \widehat{\mathbf{R}}_p = \begin{bmatrix} \widehat{\mathbf{Q}}_1 & \widehat{\mathbf{Q}}_2 \end{bmatrix} \begin{bmatrix} \widehat{\mathbf{R}}_{11} & \widehat{\mathbf{R}}_{12} \\ & \widehat{\mathbf{R}}_{22} \end{bmatrix} \label{eqn:kpxqrfact}
	\end{equation}
	where the block dimensions are sized $k,s$, as appropriate. 
	
	We can then write
	\begin{align*}
	\widehat{\mathbf{Q}}_p &\widehat{\mathbf{Q}}_p^T \mathbf{A} \\
	&= \widehat{\mathbf{Q}}_p \begin{bmatrix} \widehat{\mathbf{Q}}_1^T \\ \widehat{\mathbf{Q}}_2^T \end{bmatrix} \mathbf{U} \left[ \begin{array}{c|c} \begin{pmatrix} \mathbf{\Sigma}_1 & \\ & \mathbf{\Sigma}_2 \\ \mathbf{0} & \mathbf{0} \\ \mathbf{0} & \mathbf{0} \end{pmatrix} & \begin{pmatrix} \mathbf{0} & \mathbf{0} \\ \mathbf{0} & \mathbf{0} \\ \mathbf{\Sigma}_3 & \\ & \mathbf{\Sigma}_4 \end{pmatrix} \end{array} \right] \mathbf{V}^T \\
	&= \widehat{\mathbf{Q}}_p \left[ \begin{array}{c|c} \widehat{\mathbf{Q}}_1^T \mathbf{U} \begin{pmatrix} \mathbf{\Sigma}_1 & \\ & \mathbf{\Sigma}_2 \\ \mathbf{0} & \mathbf{0} \\ \mathbf{0} & \mathbf{0} \end{pmatrix} & \widehat{\mathbf{Q}}_{1}^T \mathbf{U} \begin{pmatrix} \mathbf{0} & \mathbf{0} \\ \mathbf{0} & \mathbf{0} \\ \mathbf{\Sigma}_3 & \\ & \mathbf{\Sigma}_4 \end{pmatrix} \\ \hline \widehat{\mathbf{Q}}_2^T \mathbf{U} \begin{pmatrix} \mathbf{\Sigma}_1 & \\ & \mathbf{\Sigma}_2 \\ \mathbf{0} & \mathbf{0} \\ \mathbf{0} & \mathbf{0} \end{pmatrix} & \widehat{\mathbf{Q}}_2^T \mathbf{U} \begin{pmatrix} \mathbf{0} & \mathbf{0} \\ \mathbf{0} & \mathbf{0} \\ \mathbf{\Sigma}_3 & \\ & \mathbf{\Sigma}_4 \end{pmatrix} \end{array} \right] \mathbf{V}^T
	\end{align*}
	
	By the Cauchy interlacing theorem for singular values, it follows that 
	\begin{equation}
	\sigma_k \left( \widehat{\mathbf{Q}}_p \widehat{\mathbf{Q}}_p^T \mathbf{A} \right) \geq \sigma_k \left( \widehat{\mathbf{Q}}_1^T \mathbf{U} \begin{pmatrix} \mathbf{\Sigma}_1 & \\ & \mathbf{\Sigma}_2 \\ \mathbf{0} & \mathbf{0} \\ \mathbf{0} & \mathbf{0} \end{pmatrix} \right) \label{eqn:sveqn2}
	\end{equation}
	
	We can compare the first $k$ columns of Eqn.~(\ref{eqn:kpxqrfact}) with the expression in Eqn.~(\ref{eqn:gapmatrix}) to see that
	\begin{equation}
	\widehat{\mathbf{Q}}_1 \widehat{\mathbf{R}}_{11} = \mathbf{U} \begin{bmatrix} \mathbf{Q}_{11} \\ \mathbf{Q}_{21} \\ \mathbf{0} \\ \mathbf{H} \end{bmatrix} \label{eqn:r11def}
	\end{equation}
	which helps us to write
	\begin{align}
	\widehat{\mathbf{Q}}_1^T \mathbf{U} \begin{pmatrix} \mathbf{\Sigma}_1 & \\ & \mathbf{\Sigma}_2 \\ \mathbf{0} & \mathbf{0} \\ \mathbf{0} & \mathbf{0} \end{pmatrix} &= \left( \mathbf{U} \begin{pmatrix} \mathbf{Q}_{11} \\ \mathbf{Q}_{21} \\ \mathbf{0} \\ \mathbf{H} \end{pmatrix} \widehat{\mathbf{R}}_{11}^{-1} \right)^T \mathbf{U} \begin{pmatrix} \mathbf{\Sigma}_1 & \\ & \mathbf{\Sigma}_2 \\ \mathbf{0} & \mathbf{0} \\ \mathbf{0} & \mathbf{0} \end{pmatrix} \nonumber \\
	&= \widehat{\mathbf{R}}_{11}^{-T} \begin{pmatrix} \mathbf{Q}_{11} \\ \mathbf{Q}_{21} \\ \mathbf{0} \\ \mathbf{H} \end{pmatrix}^T \begin{pmatrix} \mathbf{\Sigma}_1 & \\ & \mathbf{\Sigma}_2 \\ \mathbf{0} & \mathbf{0} \\ \mathbf{0} & \mathbf{0} \end{pmatrix} \nonumber \\
	&= \widehat{\mathbf{R}}_{11}^{-T} \begin{bmatrix} \mathbf{Q}_{11}^T \mathbf{\Sigma}_1 & \mathbf{Q}_{21}^T \mathbf{\Sigma}_2 \end{bmatrix} \label{eqn:sveqn3}
	\end{align}
	
	On the other hand, 
	\begin{align}
	\sigma_{k+s} &= \sigma_k \left( \sigma_{k+s} \begin{bmatrix} \mathbf{Q}_{11}^T & \mathbf{Q}_{21}^T \end{bmatrix} \right) \nonumber \\
	&\leq \sigma_k \left( \begin{bmatrix} \mathbf{Q}_{11}^T \mathbf{\Sigma}_1 & \mathbf{Q}_{21}^T \mathbf{\Sigma}_2 \end{bmatrix} \right) \nonumber \\
	&= \sigma_k \left( \widehat{\mathbf{R}}_{11}^T \widehat{\mathbf{R}}_{11}^{-T} \begin{bmatrix} \mathbf{Q}_{11}^T \mathbf{\Sigma}_1 & \mathbf{Q}_{21}^T \mathbf{\Sigma}_2 \end{bmatrix} \right) \nonumber \\
	&\leq \Vert \widehat{\mathbf{R}}_{11}^T \Vert_2 \, \sigma_k \left( \widehat{\mathbf{R}}_{11}^{-T} \begin{bmatrix} \mathbf{Q}_{11}^T \mathbf{\Sigma}_1 & \mathbf{Q}_{21}^T \mathbf{\Sigma}_2 \end{bmatrix} \right) \label{eqn:sveqn4}
	\end{align}
	
	Combining Eqns.~(\ref{eqn:sveqn1}),~(\ref{eqn:sveqn2}),~(\ref{eqn:sveqn3}), and~(\ref{eqn:sveqn4}), we obtain
	\begin{equation}
	\sigma_k(\mathbf{B}_k) \geq \frac{\sigma_{k+s}}{\Vert \widehat{\mathbf{R}}_{11}^T \Vert_2}
	\end{equation}
	
	With the help of Eqn.~(\ref{eqn:r11def}), 
	\begin{align}
	\widehat{\mathbf{R}}_{11}^T \widehat{\mathbf{R}}_{11} &= \widehat{\mathbf{R}}_{11}^T \left(\mathbf{U}^T \widehat{\mathbf{Q}}_1 \right)^T \left(\mathbf{U}^T \widehat{\mathbf{Q}}_1 \right) \widehat{\mathbf{R}}_{11} \\
	&= \begin{bmatrix} \mathbf{Q}_{11} \\ \mathbf{Q}_{21} \end{bmatrix}^T \begin{bmatrix} \mathbf{Q}_{11} \\ \mathbf{Q}_{21} \end{bmatrix} + \mathbf{H}^T \mathbf{H} \\
	&= \mathbf{I} + \mathbf{H}^T \mathbf{H}
	\end{align}
	, which completes the proof. 
\end{proof}

We are now in a position to provide the proof for Theorem~\ref{thm:svconvergence}
\begin{proof}
	With an eye toward Lemma~\ref{lem:svconvlemma}, we proceed by providing a bound for $\Vert \mathbf{H} \Vert_2^2$. 
	\begin{align*}
	& \, \Vert \mathbf{H} \Vert_2^2 \\
	=& \, \sigma_1^2 \left( \mathbf{H} \mathbf{H}^T \right) \\
	=& \, \sigma_1^2 \Bigg( T_{2p+1}(\mathbf{\Sigma}_4) (\mathbf{V}{41}-\mathbf{V}_{42}\mathbf{V}_{32}^{-1}\mathbf{V}_{31}) \left(\widetilde{\mathbf{R}}^T \widetilde{\mathbf{R}} \right)^{-1} \\
	&(\mathbf{V}{41}-\mathbf{V}_{42}\mathbf{V}_{32}^{-1}\mathbf{V}_{31})^T T_{2p+1}(\mathbf{\Sigma}_4) \Bigg) \\
	=& \, \sigma_1^2 \Bigg( T_{2p+1}(\mathbf{\Sigma}_4) (\mathbf{V}{41}-\mathbf{V}_{42}\mathbf{V}_{32}^{-1}\mathbf{V}_{31}) \nonumber \\ 
	&\Bigg( \begin{bmatrix} \mathbf{V}_{11} - \mathbf{V}_{12} \mathbf{V}_{32}^{-1} \mathbf{V}_{31} \\ \mathbf{V}_{21}-\mathbf{V}_{22}\mathbf{V}_{32}^{-1} \mathbf{V}_{31} \end{bmatrix}^T \begin{bmatrix} T_{2p+1}^2(\mathbf{\Sigma}_1) & \\ & T_{2p+1}^2(\mathbf{\Sigma}_2) \end{bmatrix} \\
	&\hspace{4cm}\begin{bmatrix} \mathbf{V}_{11} - \mathbf{V}_{12} \mathbf{V}_{32}^{-1} \mathbf{V}_{31} \\ \mathbf{V}_{21}-\mathbf{V}_{22}\mathbf{V}_{32}^{-1} \mathbf{V}_{31} \end{bmatrix} \Bigg)^{-1} \nonumber \\
	&(\mathbf{V}{41}-\mathbf{V}_{42}\mathbf{V}_{32}^{-1}\mathbf{V}_{31})^T T_{2p+1}(\mathbf{\Sigma}_4) \Bigg) \\
	\leq& \, \sigma_1^2 \Bigg( T_{2p+1}(\mathbf{\Sigma}_4) (\mathbf{V}{41}-\mathbf{V}_{42}\mathbf{V}_{32}^{-1}\mathbf{V}_{31}) \nonumber \\ 
	( &(\mathbf{V}_{11}-\mathbf{V}_{12}\mathbf{V}_{32}^{-1}\mathbf{V}_{31})^T T_{2p+1}^2(\mathbf{\Sigma}_1) (\mathbf{V}_{11}-\mathbf{V}_{12}\mathbf{V}_{32}^{-1}\mathbf{V}_{31}) )^{-1} \nonumber \\
	&(\mathbf{V}{41}-\mathbf{V}_{42}\mathbf{V}_{32}^{-1}\mathbf{V}_{31})^T T_{2p+1}(\mathbf{\Sigma}_4) \Bigg) \\
	=& \, \Vert T_{2p+1}(\mathbf{\Sigma}_4) (\mathbf{V}_{41}-\mathbf{V}_{42}\mathbf{V}_{32}^{-1} \mathbf{V}_{31}) \\
	&(\mathbf{V}_{11} -\mathbf{V}_{12}\mathbf{V}_{32}^{-1}\mathbf{V}_{31})^{-1} T_{2p+1}^{-1}(\mathbf{\Sigma}_1) \Vert_2^2 \\
	\leq& \, T_{2p+1}^{-2}\left( 1 + 2 \cdot \frac{\sigma_k-\sigma_{k+s+r+1}}{\sigma_{k+s+r+1}} \right) \\
	&\Vert (\mathbf{V}_{41}-\mathbf{V}_{42}\mathbf{V}_{32}^{-1} \mathbf{V}_{31}) (\mathbf{V}_{11} -\mathbf{V}_{12}\mathbf{V}_{32}^{-1}\mathbf{V}_{31})^{-1} \Vert_2^2
	\end{align*}
	The $1 + 2 \cdot \frac{\sigma_k - \sigma_{k+s+r+1}}{\sigma_{k+s+r+1}}$ factor is interpreted as shifting the Chebyshev polynomial $T_{2p+1}$ onto the interval $[0, \sigma_{k+s+r+1}]$, so that the tail of the singular spectrum is bounded by $1$ and convergence is driven by the growth of the Chebyshev polynomial on the $[\sigma_k, \cdots, \sigma_1]$ part of the spectrum that we are interested in.
	
	Repeating the previous argument for $1 \leq j \leq k$ completes the proof for the bound on $\sigma_j(\mathbf{B}_k)$.
\end{proof}

Due to space constraint, we omit the proofs for the corollaries of Theorem~\ref{thm:svconvergence}. They are similar in flavor to the proof above and involve constructions of specifically chosen $\mathbf{X}$ matrices in each case.

We close by providing the proof for Theorem~\ref{thm:superlinear}.

\begin{proof}
	The statement of the theorem is equivalent to the statement that 
	\begin{equation}
	\mathcal{C} \, T_{2p+1}^{-1}\left( 1+2 \cdot \frac{\sigma_j - \sigma_{j+r+1}}{\sigma_{j+r+1}} \right) \rightarrow 0
	\end{equation}
	superlinearly. For notational convenience we assume $\sigma_j$ is not a multiple singular value and we have chosen $s=0$; otherwise, the following argument can be made for the largest choice of $s$ such that $\sigma_{j+s} = \sigma_j$. 
	
	Recall that a sequence $\left\{ a_n \right\}$ converges superlinearly to $a$ if 
	\begin{equation}
	\lim_{n \rightarrow \infty} \frac{\vert a_{n+1} - a \vert}{\vert a_n - a \vert} = 0
	\end{equation}
	
	For any fixed $j = 1, \cdots, k$, define 
	\begin{align*}
	a_q &\equiv \mathcal{C}(r) T^{-1}_{2\left(q+1 - \frac{k+r}{b} \right) + 1} \left(1 + 2 \cdot \frac{\sigma_j - \sigma_{j+r+1}}{\sigma_{j+r+1}} \right) 
	\end{align*}
	where we have explicitly specified the dependence of the constant $\mathcal{C}$ on the analysis parameter $r$, and expressed $p$ in terms of $q$. We approximate 
	\begin{align*}
	a_q &\approx \mathcal{C}(r) \cdot \frac{1}{2} \left( 1 + g + \sqrt{2g} \right)^{-\left( 2\left(q+1-\frac{k+r}{b} \right) + 1\right)} \\
	&\approx \frac{1}{2} \cdot \mathcal{C}(r) \cdot \left(1 + g + \sqrt{2g}\right)^{-2\left( 1-\frac{k+r}{b} \right)+1} \cdot \left(1+g+\sqrt{2g}\right)^{-2q} \\
	&\text{where } \, g = 2 \cdot \frac{\sigma_j - \sigma_{j+r+1}}{\sigma_{j+r+1}} = 2 \cdot \left( \frac{\sigma_j}{\sigma_{j+r+1}} - 1 \right)
	\end{align*}
	Then we argue that $a_{q+1}/a_q \rightarrow 0$ as follows.
	\begin{align}
	\frac{a_{q+1}}{a_q} &= \frac{1}{\left( 1+g+\sqrt{2g} \right)^2} \leq \frac{1}{(1+g)^2} 
	\end{align}
	Since we assume a spectrum such that $\sigma_i \rightarrow 0$ eventually, it is possible to chose $r$ sufficiently large such that $1/(1+g)^2$ is arbitrarily small. 
\end{proof}

Rigorously, the above argument applies only to infinite dimensional operators, as in the finite dimensional case, $r \leq n$ cannot be chosen to be arbitrarily large. However, numerous previous works have noted that in practice, the convergence does tend to exhibit superlinear behavior for certain types of spectrums \cite{saad1994theoretical}. 

\section{Numerical Experiments} \label{sec:numerical_experiments}

\subsection{Computational Complexity}

We will give an arithmetic complexity accounting of the randomized block Lanczos algorithm. The initialization of the random starting matrix $\mathbf{\Omega}$ takes $\mathcal{O}(nb)$ floating-point operations (flops). In step 1, the formation of the Krylov matrix $\mathbf{K}$ consists of $1$ matrix multiplications of $\mathbf{A}\mathbf{\Omega}$ along with $2(q-1)$ accumulated applications of either $\mathbf{A}$ or $\mathbf{A}^T$ for a total of $\mathcal{O}(mnbq)$ flops. The orthonormal basis $\mathbf{Q}$ of $\mathbf{K}$ can be computed using a QR factorization using the standard Householder implementation, which has complexity $\mathcal{O}(m(bq)^2)$. Finally, steps 3 and 4 consists of first forming $\mathbf{Q}^TA$ for $\mathcal{O}(mnbq)$ flops, then computing its truncated SVD factorization. Because the size of this matrix is $qb \times n$ and we expect $qb \approx k$ to be small, we assume its SVD computation is performed with a non-specialized dense matrix algorithm, using $\mathcal{O}(n(bq)^2)$ flops. The final step of projecting the right $k$ singular vectors onto $Q$ is an additional $\mathcal{O}(m(bq)^2)$ flops. 

Overall, the computational complexity of Algorithm~\ref{alg:blk_lanczos} is $\mathcal{O}(mnbq + (m+n)(bq)^2)$. The first term dominates the computations and is the result of performing the matrix multiplications for the computation of the Lanczos block vectors. Fortunately, matrix multiplication is a highly optimized and highly tuned part of many matrix computation libraries, especially for suitably chosen block sizes. 

We draw attention to the fact that the parameters $b$ and $q$ only appear together as the quantity $bq$ in our computational complexity count. This suggests that we may freely vary $b$, $q$ - as long as they vary inversely and the quantity $bq$ remains constant, the cost for running Algorithm~\ref{alg:blk_lanczos} remains comparable. (In practice, this will only hold true for $b > 1$, due to the efficiency of BLAS2 and BLAS3 operations compared with BLAS1 operations.) Given the comparable computational complexity, and assuming the conditions for the convergence of Algorithm~\ref{alg:blk_lanczos} is met, we need not privilege the block size choice $b = k$. In fact, we show empirically that in many cases, it is advantageous to choose block sizes $b$ strictly smaller than $k$.

\subsection{Activities and Sports Dataset}

The Activities and Sports Dataset is a dataset consisting of motion sensor data for $8$ subjects performing $19$ daily/sports activities, for $5$ minutes, sampled at $25$Hz frequency. This dataset can be found at \cite{altun2010comparative}. 

The matrix associated with this dataset is dense and of dimensions $\mathbf{A} \in \mathbb{R}^{9120 \times 5625}$, where each row is a sample and each entry is a double precision float. Figure~\ref{fig:asd_spec} shows a plot of the first $500$ singular values of $\mathbf{A}$. As is typically for data matrices, this matrix exhibit spectrum decay on the order of $\sigma_j = \frac{1}{j^\tau}$, and our theory suggests that in this case, we should observe superlinear convergence for the RBL algorithm.  

\begin{figure}
	\centering
	\includegraphics[scale=0.6]{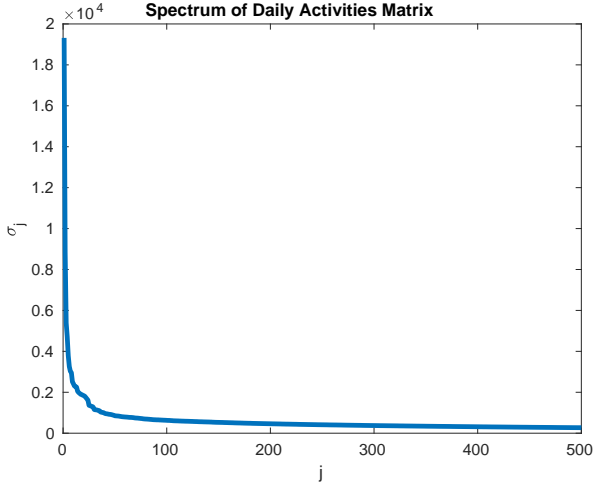}
	\caption{First $500$ singular values of the Daily Activities and Sports Matrix. } 
	\label{fig:asd_spec}
\end{figure}

In this set of experiments, we investigate the convergence of a single singular value with respect to the number of iterations, in addition to the affect of the block size on convergence. We run the RSI and RBL algorithms on the Activities and Sports Dataset matrix with a target rank of $k = 200$, and examine the convergence of $\sigma_1$, $\sigma_{100}$, and $\sigma_{200}$. The results of these experiments are in Figures~\ref{fig:asd_j1},~\ref{fig:asd_j100}, and~\ref{fig:asd_j200}.

\begin{figure}
	\centering
	\includegraphics[scale=0.6]{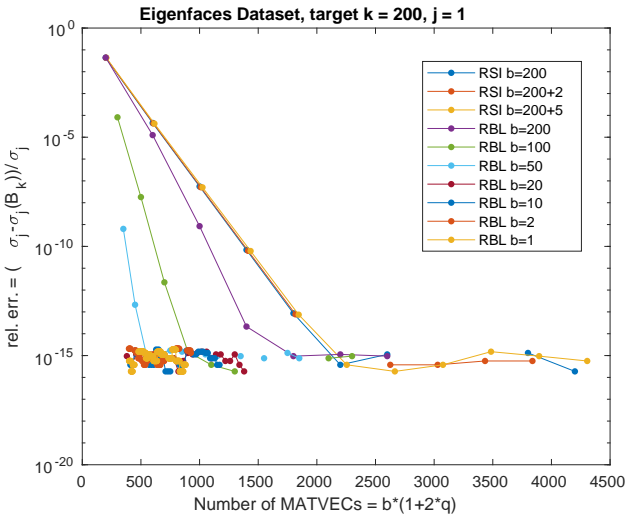}
	\caption{$k=200$ approximation of the Daily Activities Dataset, convergence of $\sigma_{1}$. } 
	\label{fig:asd_j1}
\end{figure}

\begin{figure}
	\centering
	\includegraphics[scale=0.6]{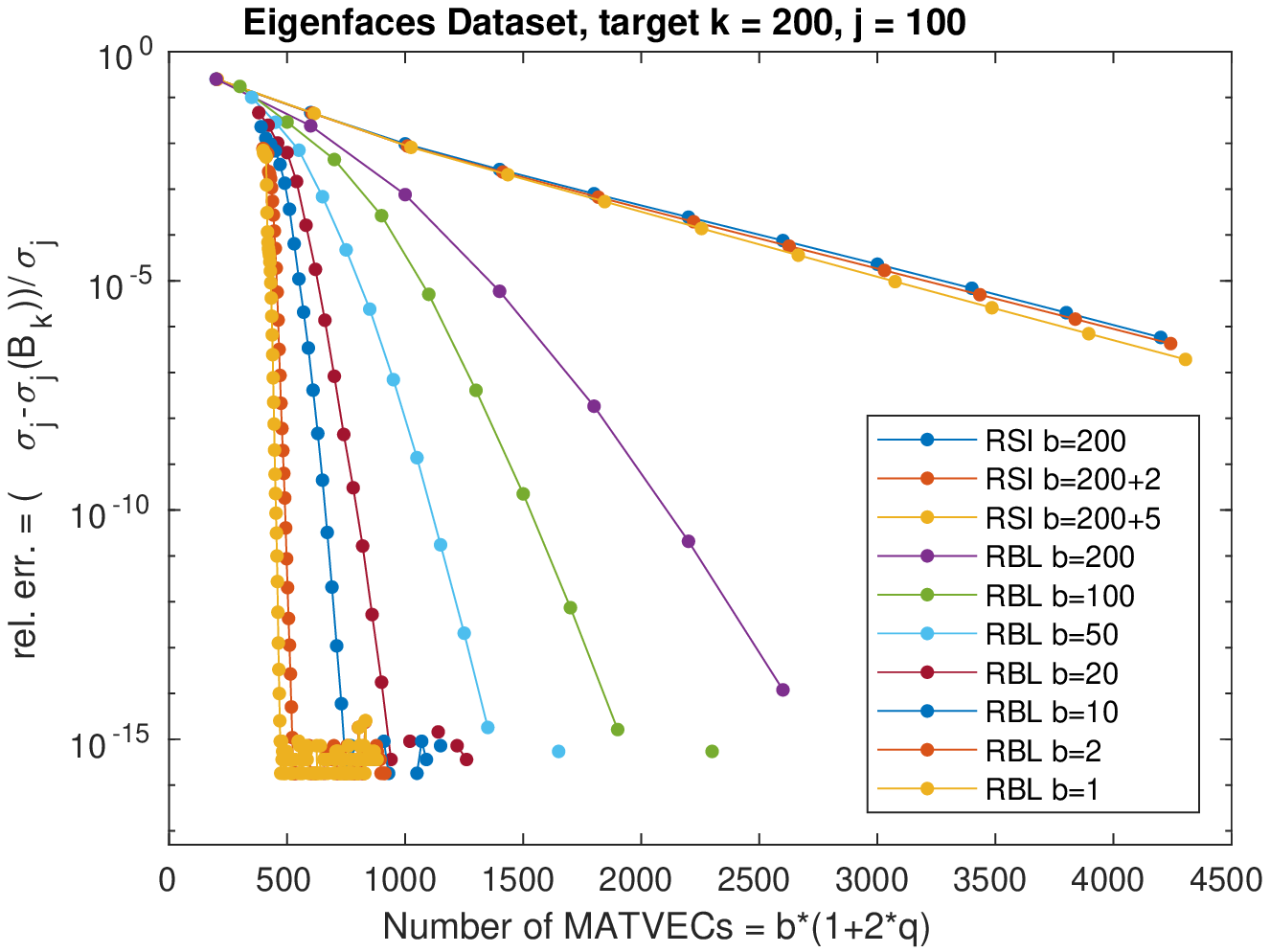}
	\caption{$k=200$ approximation of the Daily Activities Dataset, convergence of $\sigma_{100}$. } 
	\label{fig:asd_j100}
\end{figure}

\begin{figure}
	\centering
	\includegraphics[scale=0.6]{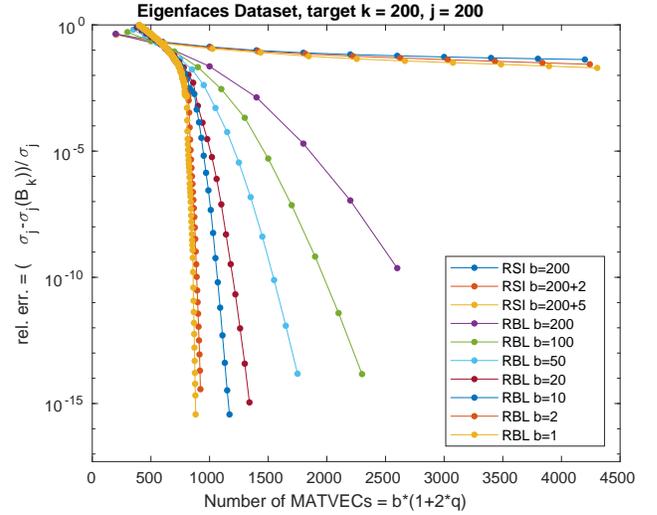}
	\caption{$k=200$ approximation of the Daily Activities Dataset, convergence of $\sigma_{200}$. } 
	\label{fig:asd_j200}
\end{figure}

Each of these plots represent the convergence of a particular singular value. In each plot, each line represents a single parameter setting for the block size $b$, for either the RSI or the RBL algorithm. The $y$-axis is in log scale, and denote 
\begin{equation}
\text{rel. err.} = \frac{\sigma_j - \sigma_j(\mathbf{B}_k)}{\sigma_j}
\end{equation}
, the relative error of the particular singular value we are examining. The $x$-axis is in linear scale, and denote the number of matrix-vector multiplications (MATVECs), a proxy measure for computational complexity. Markers on each line represent successive iterations of the algorithm. In these plots, down and to the left is good - we seek parameter settings that give good convergence for less computational complexity. We observe that, as expected, RSI converges linearly and RBL converges superlinearly. These trends are most clearly seen in Figure~\ref{fig:asd_j200} and is also present in Figure~\ref{fig:asd_j100}. The convergence of $\sigma_1$ is extremely rapid in Figure~\ref{fig:asd_j1}, and reaches double precision in $2$-$5$ iterations for all block sizes. In all cases, for both RBL and RSI, it appears that at the same computational complexity, choosing a smaller block size $b$, leads to more rapid convergence. For example, in Figure~\ref{fig:asd_j200}, we observe that in order for $\sigma_j$ to converge to a relative error of $\sim 10^{-5}$, taking $b = 1$ uses $1/2$ the number of MATVECs as taking $b = k = 200$.

\subsection{Eigenfaces Dataset}
The Eigenfaces dataset is available from the AT\&T Laboratories' Database of Faces \cite{samaria1994parameterisation}, and consists of $10$ different face images of $40$ different subjects at $92 \times 112$ pixels resolution, varying in light, facial expressions, and other details. The widely cited technique for processing this data is via principal component analysis (PCA), where it was observed that each face can be composed in large part from a few prominent ``Eigenfaces'' \cite{turk1991face}.  

The associated matrix is a dense matrix, which is formed by vectorizing each different face image as a column vector. It has dimensions $\mathbf{A} \in \mathbb{R}^{10304 \times 400}$ and is of full numerical rank. The spectrum of this matrix spans $5$ orders of magnitude but decays extremely rapidly, typical of data matrices. In fact, as seen in Figure~\ref{fig:ef_spec}, it drops to zero within the first $50$ largest singular values. 

\begin{figure}
	\centering
	\includegraphics[scale=0.6]{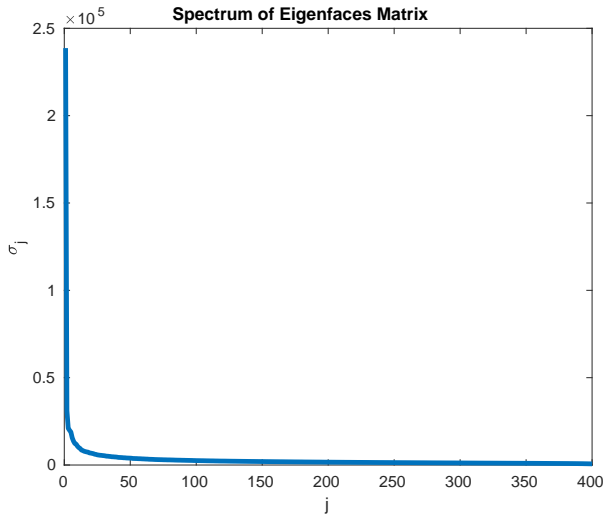}
	\caption{Spectrum of the Eigenfaces Matrix. } 
	\label{fig:ef_spec}
\end{figure}

We repeat the experiments performed in the last section. For this set of experiments, we use the RSI and RBL algorithms to compute rank-$k=100$ approximations for the Eigenfaces matrix, and examine the convergence of $\sigma_{100}$. The result appears in Figure~\ref{fig:ef_j100}. 

\begin{figure}
	\centering
	\includegraphics[scale=0.6]{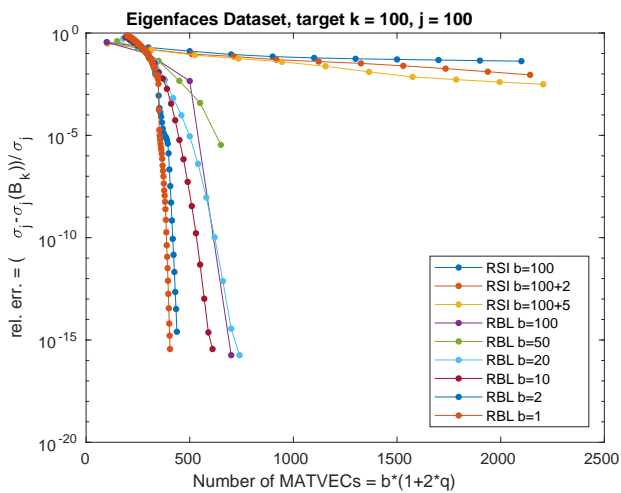}
	\caption{$k=100$ approximation of the Eigenfaces Dataset, convergence of $\sigma_{100}$. } 
	\label{fig:ef_j100}
\end{figure}

We observe similar behavior as those observed for the Daily Activities and Sports Matrix: the RSI algorithm exhibits linear convergence while the RBL algorithm exhibits superlinear convergence; smaller block sizes $b$ appear to converge more quickly for a fixed number of flops.

\section{Conclusions} \label{sec:conclusions}

In this paper, we have derived a novel convergence result for the randomized block Lanczos algorithm. We have shown that for all block sizes, the singular value approximation accuracy for this algorithm converges geometrically in the number of iterations, with a rate that is asymptotically superior to that achieved by the randomized subspace iteration algorithm. We have also shown for a matrix with spectrum decaying to zero, the RBL algorithm converges superlinearly. Additionally, we have provided numerical results in support of our analysis. 

The current work is largely theoretical in nature, and there continues to be need for quality implementations of the Randomized Block Lanczos algorithm to aid its wider adoptability. To this end, continuations of the current work might include such an (possibly parallelized) implementation, along with further investigations of practical choices for the block size parameter $b$ which balances the evident preference for a smaller $b$ for convergence with the advantages of a larger $b$ for computational efficiency and numerical stability.

\bibliographystyle{IEEEtran}
%\bibliography{IEEEabrv,rbl.bib}

\end{document}